\journal{Discrete Mathematics}
\newtheorem{thm}{Theorem}[section]
\newtheorem{lem}[thm]{Lemma}
\newtheorem{cor}[thm]{Corollary}
\theoremstyle{definition}
\newtheorem{defn}[thm]{Definition}
\newtheorem{case}{Case}
\newtheorem{subcase}{Subcase}
\newtheorem{subsubcase}{Subcase}
\newtheorem{subsubsubcase}{Subcase}
\newtheorem*{case*}{Case}
\theoremstyle{remark}
\newtheorem{rmk}[thm]{Remark}
\newtheorem*{rmk*}{Remark}
\numberwithin{equation}{section}
\numberwithin{subcase}{case}
\numberwithin{subsubcase}{subcase}
\numberwithin{subsubsubcase}{subsubcase}
\newcommand{\bm}[1]{{\mbox{\boldmath$#1$}}}
\DeclareMathOperator{\height}{height}
\DeclareMathOperator{\sdepth}{sdepth}
\DeclareMathOperator{\tdepth}{tdepth}
\begin{document}

\begin{frontmatter}
\title{A Variant of the Stanley Depth for Multisets}

\author[mymainaddress]{Yinghui Wang}
\ead{yinghui@alum.mit.edu}

\address[mymainaddress]{Department of Mathematics, Massachusetts Institute of Technology, \\Cambridge, MA 02139, United States}


\begin{abstract}
We define and study a variant 
of the \emph{Stanley depth} which we call \emph{total depth} for partially ordered sets (posets). 
This total depth is the most natural variant 
of Stanley depth from $\llbracket S_k\rrbracket$ -- the poset of nonempty subsets of $\{1,2,\dots,k\}$  ordered by inclusion -- to any finite poset. 
In particular, the total depth can be defined for the poset of nonempty submultisets of a multiset ordered by inclusion, which corresponds to a product of chains with the bottom element deleted. 
We show that the total depth agrees with Stanley depth for $\llbracket S_k\rrbracket$  but not for such posets in general. 
We also prove that the total depth of the product of chains $\bm{n}^k$ with the bottom element deleted is $(n-1)\lceil{k/2}\rceil$, which generalizes a result of Bir{\'{o}}, Howard, Keller, Trotter, and Young (2010). 
Further, we provide upper and lower bounds for a general multiset and find the total depth for any multiset with at most five distinct elements. 
In addition, we can determine the total depth for any multiset with $k$ distinct elements if we know all the interval partitions of $\llbracket S_k\rrbracket$.
\end{abstract}

\begin{keyword}
Stanley depth \sep poset \sep multiset \sep chain product \sep interval partition
\end{keyword}

\end{frontmatter}


\section{Introduction}
In \cite{S82} Stanley defined what is now called the \emph{Stanley depth} of a finitely-generated $\mathbb{Z}^n$-graded module over a commutative ring (see \cite{H} for a survey). 
Herzog et al. \cite{HVZ} established a combinatorial definition for the Stanley depth of certain modules in terms of \emph{partially ordered sets (posets)} that are the direct products of chains with the bottom element deleted, or equivalently, the sets of nonempty submultisets of multisets ordered by inclusion. 
In particular, Bir{\'{o}} et al. \cite{B} showed that the Stanley depth of $\llbracket S_k\rrbracket$ -- the poset of nonempty subsets of $S_k:=\{1,2,\dots,k\}$, ordered by inclusion -- is $\lceil k/2\rceil$.
Shen \cite{S} extended this result by finding that
Stanley depth of a complete intersection monomial ideal minimally-generated by $m$ monomials is
$n - \lfloor m/2 \rfloor$, and this was later proved to be a lower bound for the Stanley depth of an $m$-generated squarefree monomial ideal by Keller and Young \cite{KY}. 

Although the Stanley depth is an interesting invariant of the posets just mentioned, there is no natural way to extend the definition to any (finite) poset. 
In this paper we will define and study a variant 
of Stanley depth which we call \emph{total depth}.
The total depth of a finite poset $P$ is the largest integer $d$ such that there is a partition of $P$ 
into intervals whose top elements all have height at least $d$, where the {\it height} of an element $Z\in P$ has a standard definition -- the maximal size of a chain in $P$ starting with $Z$ (Definition \ref{def:height}). 
For instance, if $P$ has a unique maximal element $\hat{1}$ and the height of $\hat{1}$ is $h$, then the total depth of $P$ is $h$ if and only if $P$ has a unique minimal element.

This total depth agrees with Stanley depth for $\llbracket S_k\rrbracket$ but not for the more general product of chains with the bottom element deleted (Section \ref{sec:comparison}). 
The total depth is the most natural variant 
of Stanley depth from $\llbracket S_k\rrbracket$ to any finite poset. 
It appears to be an interesting combinatorial invariant of a poset for its own sake, and it seems to be quite challenging to compute in general.

This paper begins with the definitions and comparisons of the Stanley depth and the total depth in Section \ref{sec:def}. 
Section \ref{sec:sdepthS} presents our main approach and shows that the Stanley depths of a multiset and its base set coincide.
Section \ref{sec:optimal} proves that one of the interval partitions with the maximal total depth has a special form \eqref{eq:f}.
Sections \ref{sec:n^k} and \ref{sec:bounds} derive upper and lower bounds for the total depth, and thus show that the total depth of the product of chains $\bm{n}^k$ with the bottom element deleted is $(n-1)\lceil{k/ 2}\rceil$ (Corollary \ref{coro:n^k}). 
Finally, Section \ref{sec:k=5} determines the total depths for multisets with at most five distinct elements.

\section{Definitions and Comparisons}
\label{sec:def}

\subsection{Stanley Depth}
We first recall the definition of Stanley depth from \cite{P}.

\begin{defn}
Let $\mathbb{K}$ be a field and $\mathcal{A}:=\mathbb{K}[x_1,x_2,\dots, x_k]$ the $\mathbb{K}$-algebra of polynomials over $\mathbb{K}$ in $x_1,x_2,\dots,x_k$. 

\begin{enumerate}[leftmargin=*,topsep=-0.5ex,itemsep=-0.5ex,partopsep=1ex,parsep=1ex]
\item 
A \emph{monomial} in  $\mathcal{A}$ is a product $x_1^{a_1} x_2^{a_2} \cdots x_k^{a_k}$ with $a_j$'s nonnegative integers; 

\item 
A \emph{monomial ideal} $I$ of $\mathcal{A}$ is an ideal generated by monomials in $\mathcal{A}$\,;

\item 
A \emph{Stanley space} of dimension $d$ is a $\mathbb{K}$-subspace of $\mathcal{A}$ of form $x_1^{a_1}\! x_2^{a_2}\! \cdots x_k^{a_k}\mathbb{K}[\mathbf{Z}]$, where $\mathbf{Z}$ is a $d$-element subset of $\{x_1,x_2,\dots, x_k\}$; 

\item 
A \emph{Stanley decomposition} $\mathcal{D}$ of $I$ is a decomposition of $I$ as a finite direct sum of Stanley spaces; 

\item 
The \emph{Stanley depth} of $\mathcal{D}$, denoted by ${\sdepth}\,\mathcal{D}$, is the {\it minimal} dimension of these Stanley spaces, and the \emph{Stanley depth} of $I$ is
\begin{equation*}
{\sdepth}\,I\ :=\ \max_{\mathcal{D}} {\sdepth}\,\mathcal{D},
\end{equation*}
where the maximum is taken over all the Stanley decompositions $\mathcal{D}$ of $I$.
\end{enumerate}
\end{defn}

In a combinatorial view, each monomial $x_1^{a_1} x_2^{a_2} \cdots x_k^{a_k}$ corresponds to a multiset $\{1^{a_1}, 2^{a_2}, \dots, k^{a_k}\}$ consisting of $a_j$ $j$'s, $j=1,2,\dots,k$, and each monomial ideal induces a set of such multisets. 
In the case that a monomial ideal $I^{\star}$ induces the set of \emph{all} nonempty submultisets of the  multiset
\begin{equation}\label{eq:S}
\{1^{n_1}, 2^{n_2}, \dots, k^{n_k}\}, 
\end{equation}
denoted by $S$ throughout this paper, the Stanley depth of $I^{\star}\!$ has a combinatorial interpretation in terms of the poset $\llbracket S\rrbracket$ of nonempty submultisets of $S$ ordered by inclusion (Definition \ref{def:I} below). 
Here, a \emph{submultiset} of $S$ is a multiset $\{1^{a_1}, 2^{a_2}, \dots, k^{a_k}\}$ with $a_j\le n_j,\ 1\le j\le k$. 

On the other hand, the submultisets of this $S$ correspond bijectively to the elements of a {\it product of chains} 
\begin{equation}\label{eq:U}
U=(\bm{n_1+1})\times (\bm{n_2+1})\times \cdots \times (\bm{n_k+1}),
\end{equation}
where $\bm{n+1}$ denotes the $(n+1)$-element chain $0<1<\cdots <n$. 
In particular, the element $(a_1,a_2,\dots,a_k)\in U$ corresponds to the submultiset $a=\{1^{a_1},2^{a_2},$ $\dots, k^{a_k}\}$ of $S$. 
For convenience, we denote this  $a$  by $(a_1,a_2,\dots,a_k)$, or abbreviated to $a_1a_2\dots a_k$, 
and  the {\it bottom element} $(0,0,\dots,0)$ of $U$ by $\bm{0}$.
Then $I^{\star}\!$ corresponds to $U\,\backslash \{\bm{0}\}$, the product of chains with the bottom element deleted.

\smallskip
The authors of \cite{HVZ} found a correspondence between 
the Stanley decompositions of $I^{\star}\!$ and the interval partitions (defined in Definition \ref{def:pi} below) of $\llbracket S\rrbracket$, which allowed them to compute the Stanley depths of these Stanley decompositions and of $I^{\star}\!$ in terms of these interval partitions, and thus to define the Stanley depths for interval partitions of $\llbracket S\rrbracket$ and for $\llbracket S\rrbracket$ (Definition \ref{def:I}). 
In fact, they \cite{HVZ} showed these results not only for $I^{\star}\!$ but also for the more general $\mathbb{Z}^n$-graded modules $I/I'$,  where $I' \subset I$ are monomial ideals in $S$.


\begin{defn}\label{def:pi}
For two sets $X\le Y$ in a poset $P$, an \emph{interval} $[X,Y]$ is defined as 
$\{Z\in P: X\le Z\le Y\}$. 
An \emph{interval partition} $\pi$ of $P$ is a partition of $P$ into nonempty pairwise disjoint intervals.
\end{defn}

In our case, the poset $P=\llbracket S\rrbracket$. 
Here and throughout this paper, 
we denote by $S$ the multiset 
of \eqref{eq:S}, 
by $S_k$ the base set $\{1,2,\dots,k\}$ of $S$,
by $\llbracket n_1,n_2,\dots,n_k \rrbracket$ the poset $\llbracket S\rrbracket$, 
and by $x_j$ (resp., $y_j$) the $j$-th components of $X$ (resp., $Y$) in $\llbracket S\rrbracket$, $j=1,2,\dots,k$.
Then $X\le Y$ is equivalent to $x_j\le y_j$ for all $j$, and $X< Y$ if further $x_{j'}< y_{j'}$ for some $j'$.

Fig.\,\ref{fig:1} illustrates the {\it Hasse diagrams} of two posets  $\llbracket S_3 \rrbracket=\llbracket\{1,2,3\}\rrbracket=\llbracket1,1,$ $1\rrbracket$ and $\llbracket\{1^3,2^2\}\rrbracket=\llbracket\{1,1,1,2,2\}\rrbracket=\llbracket2,3\rrbracket$ and one interval partition of each poset.
Note that according to the notation below \eqref{eq:U}, the labels in Fig.\,\ref{fig:1}(a) are $(1,1,1), (1,1,0), \dots, (0,0,1)$, or abbreviated to $111,110,\dots,001$. 
Similarly, the labels in Fig.\,\ref{fig:1}(b) are 
abbreviated to $23,22,\dots,01$. 
  
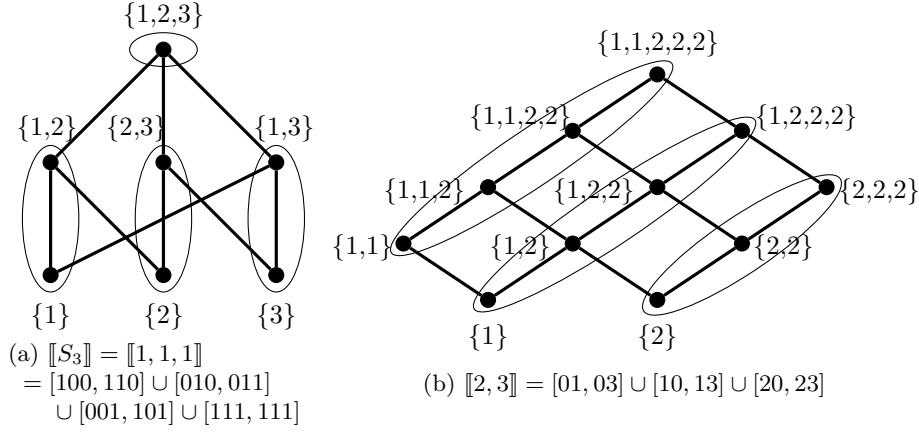
\begin{figure}[!h]
\caption{Posets and Interval Partitions}\label{fig:1}
\begin{subfigure}{.339\textwidth}
\centering
\begin{tikzpicture}[scale=0.75,roundnode/.style={circle, draw, fill, inner sep=2pt}]
\node [roundnode][label={[shift={(0,-0.95)}]\{1\}}] (v100) at (-2,-1) {};
\node [roundnode][label={[shift={(0,-0.95)}]\{2\}}] (v010) at (0,-1) {};
\node [roundnode][label={[shift={(0,-0.95)}]\{3\}}] (v001) at (2,-1) {};
\node [roundnode][label={[shift={(-0.05,0.05)}]\{1,2\}}] (v110) at (-2,1) {};
\node [roundnode][label={[shift={(0.1,0.05)}]\{1,3\}}] (v101) at (2,1) {};
\node [roundnode][label={[shift={(-0.36,0.05)}]\{2,3\}}] (v011) at (0,1) {};
\node [roundnode][label={[shift={(0,0.05)}]\{1,2,3\}}] (v111) at (0,3) {};

\draw [very thick] (v100) edge (v110);
\draw [very thick] (v100) edge (v101);
\draw [very thick] (v010) edge (v110);
\draw [very thick] (v010) edge (v011);
\draw [very thick] (v001) edge (v101);
\draw [very thick] (v001) edge (v011);
\draw [very thick] (v110) edge (v111);
\draw [very thick] (v011) edge (v111);
\draw [very thick] (v101) edge (v111);

\draw  (-2,0) ellipse (0.5 and 1.3);
\draw  (0,0) ellipse (0.5 and 1.3);
\draw (2,0) ellipse (0.5 and 1.3);
\draw  (0,3) ellipse (0.6 and 0.3);
\end{tikzpicture}
\caption{
$\llbracket S_3 \rrbracket=\llbracket 1,1,1\rrbracket  \\ \hphantom{\,\,}  =  [100,110]\cup[010, 011]
\\ \hphantom{a\,=} 
 \cup[001,101]\cup[111,111] $}
\label{fig:sfig1}  
\end{subfigure}
\begin{subfigure}{.661\textwidth}
\centering
\begin{tikzpicture}[scale=0.75,roundnode/.style={circle, draw, fill, inner sep=2pt}]
\node [roundnode][label={[shift={(0,0)}]\{1,1,2,2,2\}}] (v23) at (0,3) {};
\node [roundnode][label={[shift={(0.85,-0.23)}]\{1,2,2,2\}}] (v13) at (1.5,2) {};
\node [roundnode][label={[shift={(-0.7,-0.23)}]\{1,1,2,2\}}] (v22) at (-1.5,2) {};
\node [roundnode][label={[shift={(0.7,-0.46)}]\{2,2,2\}}] (v03) at (3,1) {};
\node [roundnode][label={[shift={(-0.85,-0.46)}]\{1,2,2\}}] (v12) at (0,1) {};
\node [roundnode][label={[shift={(-0.85,-0.46)}]\{1,1,2\}}] (v21) at (-3,1) {};
\node [roundnode][label={[shift={(0.55,-0.46)}]\{2,2\}}] (v02) at (1.5,0) {};
\node [roundnode][label={[shift={(-0.7,-0.46)}]\{1,2\}}] (v11) at (-1.5,0) {};
\node [roundnode][label={[shift={(-0.55,-0.46)}]\{1,1\}}] (v20) at (-4.5,0) {};
\node [roundnode][label={[shift={(0,-0.9)}]\{1\}}] (v10) at (-3,-1) {};
\node [roundnode][label={[shift={(0,-0.9)}]\{2\}}] (v01) at (0,-1) {};

\draw [very thick] (v01) edge (v02);
\draw [very thick] (v01) edge (v11);
\draw [very thick] (v02) edge (v03);
\draw [very thick] (v02) edge (v12);
\draw [very thick] (v11) edge (v12);
\draw [very thick] (v11) edge (v21);
\draw [very thick] (v20) edge (v21);
\draw [very thick] (v03) edge (v13);
\draw [very thick] (v10) edge (v11);
\draw [very thick] (v10) edge (v20);
\draw [very thick] (v12) edge (v13);
\draw [very thick] (v12) edge (v22);
\draw [very thick] (v21) edge (v22);
\draw [very thick] (v13) edge (v23);
\draw [very thick] (v22) edge (v23);

\draw [rotate=34] (-1,2.5) ellipse (3 and 0.5);
\draw [rotate=34] (-0.34,0.85) ellipse (3 and 0.5);
\draw [rotate=34] (1.23,-0.86) ellipse (2.1 and 0.5);
\end{tikzpicture}
\caption{$\llbracket2,3\rrbracket =  [01,03]\cup[10,13]\cup[20,23] $  }
\label{fig:sfig2}
\end{subfigure}
\end{figure}


\medskip
Then the Stanley depth of $\llbracket S\rrbracket$  can be interpreted 
as follows \cite{HVZ}.

\begin{defn}\label{def:I}
Let $I^{\star}\!$ be the monomial ideal corresponding to $\llbracket S\rrbracket$ 
and $\pi=\bigcup_i\, [X_i,Y_i]$ be the interval partition of $\llbracket S\rrbracket$ corresponding to a Stanley decomposition $\mathcal{D}$ of $I^{\star}$, where $U$ and $S$ are defined by \eqref{eq:U} and \eqref{eq:S}, respectively, and  $X_i=(x_{i1},x_{i2},\dots, x_{ik})$ and $Y_i=(y_{i1},y_{i2},\dots, y_{ik})$ with $x_{ij},y_{ij} \in \{0,1,\dots, n_j\}$, $j=1,2,\dots,k$.
We define the {\it Stanley depth} of $\pi$ as
\begin{equation}\label{eq:sdepth pi}
{\sdepth}\,\pi := {\sdepth}\,\mathcal{D} = \min_{i} \#\{j: y_{ij}=n_j\}=\min_{[X,Y]\in \pi} \#\{j: y_{j}=n_j\},
\end{equation}
and the {\it Stanley depth} of $\llbracket S\rrbracket$ as (see \cite[pp.\,476--477]{B})
\begin{equation}\label{eq:sdepthS}
{\sdepth}\,\llbracket S\rrbracket := {\sdepth}\,I^{\star}\! =\! \max_{\pi\in \mathfrak{P}(S)}\! {\sdepth}\,\pi= \! \max_{\pi\in \mathfrak{P}(S)} \min_{[X,Y]\in \pi}\! \#\{j\!: y_{j}=n_j\},
\end{equation}
where $\mathfrak{P}(S)$ is the set of interval partitions of $\llbracket S\rrbracket$.
\end{defn}

For example, the Stanley depths of the partitions in Fig.\,\ref{fig:1} are 
(a) $\min\{2,2,2,$ $3\} =2$; 
(b) $\min\{1,1,2\} =1$.

\subsection{Total Depth}
The Stanley depth of $\llbracket S_k\rrbracket$ can be recast in terms of the heights of its elements (defined below). This inspires us to extend Stanley depth to a general finite poset and thus define the total depth.

\begin{defn}\label{def:height}
Let $P$ be any finite poset. We define the {\it height} of an element $Z\in P$ as the maximal size of a chain $Z=Z_1>Z_2>\cdots >Z_n$ in $P$, denoted by ${\height}\,Z$ = $n$. 
\end{defn}

When $Z=(z_1,z_2,\dots,z_k)\in \llbracket S\rrbracket$ with $S$  the multiset of \eqref{eq:S}, by definition
\begin{equation}\label{eq:height}
{\height}\,Z = z_1+z_2+\cdots+z_k\,.
\end{equation}
In particular, when $S$ is the {\it normal} set $S_k=\{1,2,\dots,k\}$, i.e., when all $n_j$'s equal $1$, we have $z_j\in\{0,1\}$ and thus
\begin{equation}\label{eq:depthS_k}
{\height}\,Z = z_1+z_2+\cdots+z_k=\#\{j: z_{j}=1\}=\#\{j: z_{j}=n_j\}.
\end{equation}
Applying to $Y_i=
(y_{i1},y_{i2},\dots,y_{ik})$ and combining with \eqref{eq:sdepthS} leads to
\begin{equation}\label{eq:sdepth S_k}
{\sdepth}\,\llbracket S_k\rrbracket  =\max_{\pi\in \mathfrak{P}(S_k)} \min_{[X,Y]\in \pi} {\height}\,Y.
\end{equation}
Thus a natural variant 
of Stanley depth to any finite poset arises. 

\begin{defn}\label{def:tdepth}
Given a finite poset $P$, we define the \emph{total depth} of an interval partition $\pi$ of $P$ as
\begin{equation}\label{eq:tdepth pi}
{\tdepth}\,\pi:= \min_{[X,Y]\in \pi} {\height}\,Y,
\end{equation}
and the \emph{total depth} of $P$ as
\begin{equation}\label{eq:tdepth P}
{\tdepth}\,P:= \max_\pi {\tdepth}\,\pi 
=\max_\pi \min_{[X,Y]\in \pi} {\height}\,Y,
\end{equation}
where the maximum is taken over all the interval partitions $\pi$ of $P$. 
\end{defn}

In this paper we are interested in the case $P=\llbracket S\rrbracket$.
For example, the total depths of the partitions in Fig.\,\ref{fig:1} are (a)
$\min\{2,2,2,3\} =2$; (b) $\min\{3,4,5\} =3$.

\subsection{Comparisons of Total Depth with Stanley Depth}
\label{sec:comparison}
When  $S=S_k$, comparing \eqref{eq:sdepth S_k} with \eqref{eq:tdepth P}, we deduce that 
\begin{equation}\label{eq:S_k}
{\sdepth}\,\llbracket S_k\rrbracket =  {\tdepth}\,\llbracket S_k\rrbracket,
\end{equation}
i.e., the Stanley depth and the total depth of a normal set coincide.
Further, we will show that the Stanley depths of a general multiset $S$ and its base set coincide (Theorem \ref{thm:n=s}).

However, the Stanley depth and the total depth are generally {\it not} equal. 
In fact, for the multiset $S$ of \eqref{eq:S}, plugging \eqref{eq:height} into \eqref{eq:tdepth P} with $P=\llbracket S\rrbracket$ yields
\begin{equation}\label{eq:tdepth S}
{\tdepth}\,\llbracket S\rrbracket= \max _{\pi\in \mathfrak{P}(S)} \min_{[X,Y]\in \pi} \sum_{j=1}^k y_{j}\,,
\end{equation}
which is different from the right-hand side of definition \eqref{eq:sdepthS} of ${\sdepth}\,\llbracket S\rrbracket$.
In particular, from \eqref{eq:sdepthS} we obtain ${\sdepth}\,\llbracket S\rrbracket\le k$, while \eqref{eq:tdepth S} implies that ${\tdepth}\,\llbracket S\rrbracket$ could be greater than $k$. 
For instance, in Fig.\,\ref{fig:1}(b) we see that ${\tdepth}\,\llbracket 2,3\rrbracket \ge 3$. 
The total depth does not agree with the Stanley depth of a poset also for subposet of the subset lattice, see for instance \cite{GLS,KSSY}.
Sections \ref{sec:n^k}--\ref{sec:k=5} will provide many other concrete examples.

\section{Stanley Depth for Multisets}\label{sec:sdepthS}
In this section we show that the Stanley depths of the multiset $S$ of \eqref{eq:S} and its base set $\{1,2,\dots,k\}$  coincide,
by establishing a chain of two mappings (see \eqref{eq:pi_k} and \eqref{eq:pi*} below) from 
$\mathfrak{P}(S)$ to $\mathfrak{P}(S_k)$ and then bijectively to $\mathfrak{P}^*(S)$, defined below as the set of interval partitions of $\llbracket S\rrbracket$ with a special form \eqref{eq:f}. 
This approach will continue playing an important role in subsequent sections.
This result first appeared in Cimpoea\c s \cite[Theorem 1.3]{C}, but with an entirely different proof, by induction on the multiplicity $n_1$.

\begin{thm}\label{thm:n=s}
For $S=\{1^{n_1}, 2^{n_2}, \dots, k^{n_k}\}$ and $S_k=\{1,2,\dots,k\}$, we have
$$ 
{\sdepth}\,\llbracket S\rrbracket = {\sdepth}\,\llbracket S_k\rrbracket =  {\tdepth}\,\llbracket S_k\rrbracket.
$$ 
\end{thm}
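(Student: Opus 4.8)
The plan is to prove the two equalities separately. The second one, $\sdepth\,\llbracket S_k\rrbracket = \tdepth\,\llbracket S_k\rrbracket$, is exactly \eqref{eq:S_k}, which was already derived from comparing \eqref{eq:sdepth S_k} with \eqref{eq:tdepth P} via the identity \eqref{eq:depthS_k}: on the normal set $S_k$ every coordinate $y_j$ lies in $\{0,1\}$, so $\#\{j: y_j = n_j\} = \sum_j y_j = \height\,Y$, and the two max–min expressions coincide term by term. So the real content is the first equality, $\sdepth\,\llbracket S\rrbracket = \sdepth\,\llbracket S_k\rrbracket$, and I would prove this by the two-map strategy announced in the section preamble. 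The idea is to build a map $\Phi\colon \mathfrak{P}(S)\to\mathfrak{P}(S_k)$ that does not decrease Stanley depth, and a map $\Psi\colon \mathfrak{P}(S_k)\to\mathfrak{P}^*(S)$ (landing in the partitions of $\llbracket S\rrbracket$ of the special form \eqref{eq:f}) that also does not decrease Stanley depth; composing and comparing with the trivial inclusion $\mathfrak{P}^*(S)\subseteq\mathfrak{P}(S)$ pins the value.

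For the inequality $\sdepth\,\llbracket S\rrbracket \le \sdepth\,\llbracket S_k\rrbracket$: starting from an interval partition $\pi = \bigcup_i [X_i,Y_i]$ of $\llbracket S\rrbracket$, I would produce an interval partition of $\llbracket S_k\rrbracket$ by "collapsing" each chain factor $\bm{n_j+1}$ down to $\bm{2}$. Concretely, given $Z=(z_1,\dots,z_k)\in\llbracket S\rrbracket$, set $\bar z_j = 1$ if $z_j = n_j$ and $\bar z_j = 0$ otherwise; one checks that $Z\mapsto\bar Z$ is order-preserving and that the image of a nonempty submultiset need not be nonempty, which is the first technical wrinkle to handle (one must discard or re-absorb the intervals that map into $\bm 0$). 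The key point is that for the top element $Y_i$ of an interval, $\#\{j: \bar y_{ij} = 1\} = \#\{j: y_{ij} = n_j\}$ is precisely the quantity controlling $\sdepth\,\pi$ in \eqref{eq:sdepth pi} and the quantity controlling $\sdepth$ of the collapsed partition in $\llbracket S_k\rrbracket$ via \eqref{eq:depthS_k}. The main obstacle here — and I expect it to be the crux of the whole argument — is that the naive collapse of a partition into intervals need not be a partition into intervals: fibers can overlap or fail to be intervals in $\llbracket S_k\rrbracket$. I would therefore not collapse directly but instead first refine $\pi$ (or pass through the normalized form \eqref{eq:f}) so that the collapse is well-defined; this is presumably why the paper routes everything through $\mathfrak{P}^*(S)$ and the map \eqref{eq:pi_k}.

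For the reverse inequality $\sdepth\,\llbracket S_k\rrbracket \le \sdepth\,\llbracket S\rrbracket$: given an interval partition $\sigma = \bigcup_i [A_i,B_i]$ of $\llbracket S_k\rrbracket$ achieving $\sdepth\,\sigma = \sdepth\,\llbracket S_k\rrbracket$, I would "inflate" it to a partition of $\llbracket S\rrbracket$ of the form \eqref{eq:f}, replacing each coordinate according to whether $a_{ij},b_{ij}$ are $0$ or $1$, while splitting the intervals $[X,Y]$ with $y_j < n_j$ in some coordinate so as to fill up all of $\llbracket S\rrbracket$; the construction must be arranged so that every top element $Y$ that is "full" in $\llbracket S_k\rrbracket$ (i.e. $b_j = 1$) stays full in $\llbracket S\rrbracket$ (i.e. $y_j = n_j$), guaranteeing $\#\{j: y_j = n_j\}\ge \sdepth\,\sigma$ for every block, hence $\sdepth$ of the inflated partition is at least $\sdepth\,\llbracket S_k\rrbracket$. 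Combining the two inequalities gives $\sdepth\,\llbracket S\rrbracket = \sdepth\,\llbracket S_k\rrbracket$, and with \eqref{eq:S_k} this completes the proof. I would expect the bookkeeping for the inflation step — exhibiting an explicit interval decomposition of a product of chains that extends a given one on $\bm 2^k$ without creating a block whose top loses fullness in any coordinate — to be the second place where care is needed, though it should be more mechanical than the collapse.
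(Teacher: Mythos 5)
Your overall architecture (collapse a partition of $\llbracket S\rrbracket$ to one of $\llbracket S_k\rrbracket$, then inflate a partition of $\llbracket S_k\rrbracket$ to a good one of $\llbracket S\rrbracket$) is the paper's, and your handling of the second equality via \eqref{eq:S_k} is fine; but both key verifications are missing, and in the first direction you have chosen the wrong map. For $\sdepth\,\llbracket S\rrbracket\le\sdepth\,\llbracket S_k\rrbracket$ you collapse via $\bar z_j=1$ iff $z_j=n_j$; as you yourself observe, the images of disjoint intervals under this map can overlap and need not form an interval partition of $\llbracket S_k\rrbracket$, and your proposed repair (``refine $\pi$ first'') is left entirely unspecified, so this direction is not proved. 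The paper instead simply intersects each block with $\llbracket S_k\rrbracket$: $[X,Y]\cap\llbracket S_k\rrbracket=[X,g(Y)]$ with $g(Y)=(y_1\wedge 1,\dots,y_k\wedge 1)$, possibly empty. Because $\pi$ partitions $\llbracket S\rrbracket\supseteq\llbracket S_k\rrbracket$, the nonempty intersections automatically partition $\llbracket S_k\rrbracket$ --- there is no obstruction to repair. The price is that one loses your equality $\#\{j:\bar y_j=1\}=\#\{j:y_j=n_j\}$ and has only the inequality $\#\{j:y_j\wedge 1=1\}\ge\#\{j:y_j=n_j\}$, but that is exactly what is needed for $\sdepth\,\pi_k\ge\sdepth\,\pi$. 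The ``crux'' you identify is thus an artifact of your choice of map.

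For the reverse inequality the gap is more substantive. The inflation is just $[X,Y]\mapsto[X,f(Y)]$ with $f(Y)=(n_1y_1,\dots,n_ky_k)$, with no splitting and no additional blocks; your remark that one must ``split the intervals \dots so as to fill up all of $\llbracket S\rrbracket$'' indicates you have not seen why the inflated intervals already cover. The verification, which you omit, runs through the retraction $g$: for any $Z\in\llbracket S\rrbracket$ we have $g(Z)\in\llbracket S_k\rrbracket$, hence $g(Z)\in[X_i,Y_i]$ for some $i$, and then $X_i\le g(Z)\le Z\le f(g(Z))\le f(Y_i)$, so $Z$ lies in an inflated block; conversely $Z\in[X_i,f(Y_i)]$ forces $g(Z)\in[X_i,Y_i]$, which gives pairwise disjointness. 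Without some such argument you have not exhibited any interval partition of $\llbracket S\rrbracket$ of Stanley depth at least $\sdepth\,\llbracket S_k\rrbracket$, so the inequality $\sdepth\,\llbracket S_k\rrbracket\le\sdepth\,\llbracket S\rrbracket$ remains unproved. Once the covering-and-disjointness argument is supplied, the depth computation you describe (a coordinate of $f(Y)$ equals $n_j$ exactly when $y_j=1$) does close the proof.
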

\begin{proof}
We have proved the second equality; see \eqref{eq:S_k}.
To show the first equality,
for each interval $J=[X,Y]$ in $\llbracket S\rrbracket$,
we call the intersection 
\begin{equation}\label{eq:g}
J\cap \llbracket S_k\rrbracket=[X,g(Y)],\  {\rm{with}}\ \, g(Y):=(y_1 \wedge 1,\dots,y_k \wedge 1)\ {\rm{for}}\ Y=(y_1,\dots,y_k),
\end{equation}
the {\it induced interval} of $J$ in $\llbracket S_k\rrbracket$, where $\wedge$ represents the minimum function; 
in particular, if $x_j>y_j \wedge 1$ for some $j$, then $J\cap \llbracket S_k\rrbracket=\emptyset$. 

For each interval partition $\pi\in \mathfrak{P}(S)$, we consider its {\it induced interval partition} of $\llbracket S_k\rrbracket$ with empty intervals removed:
\begin{equation}\label{eq:pi_k:=}
\pi_k := \big\{J\cap \llbracket S_k\rrbracket: J\in \pi\  \text{and}\ J\cap \llbracket S_k\rrbracket \neq \emptyset \big\} 
\end{equation}
(with the partition property verified right below); in other words, we construct the following mapping:
\begin{equation}\label{eq:pi_k}
\pi=\bigcup_i\, [X_i,Y_i]\  
\longmapsto\ \pi_k=\bigcup_{X_i\le g(Y_i)}\, [X_i,g(Y_i)] \in \mathfrak{P}(S_k).
\end{equation}
For instance, the induced interval partition in $ \llbracket S_2\rrbracket$ of the partition in Fig.\,\ref{fig:1}(b) is $[01,01]\cup[10,11]$.

Here to show that the $\pi_k$ defined by \eqref{eq:pi_k:=} is indeed an interval partition of $\llbracket S_k\rrbracket$, 
we observe that  for any $J_1,J_2\in \pi\in \mathfrak{P}(S)$, the intersection of their induced intervals in $\llbracket S_k\rrbracket$: 
\begin{equation*}
\big( J_1 \cap \llbracket S_k\rrbracket \big) \cap \big( J_2 \cap \llbracket S_k\rrbracket \big) 
\subseteq J_1 \cap  J_2 = \emptyset,
\end{equation*}
and therefore, the intervals in  $\pi_k$ are pairwise disjoint.
Further, we  have the union of elements in $\pi_k$:
\begin{equation*}
\bigcup_{J\in \pi,\, J\cap \llbracket S_k\rrbracket \neq \emptyset } \big(J\cap \llbracket S_k\rrbracket \big) 
= \bigcup_{J\in \pi} \big(J\cap \llbracket S_k\rrbracket \big) 
= \left(\bigcup_{J\in \pi} J\right) \cap \llbracket S_k\rrbracket
= \llbracket S\rrbracket \cap \llbracket S_k\rrbracket
= \llbracket S_k\rrbracket,
\end{equation*}
as desired.

This $\pi_k$  has Stanley depth no smaller than that of $\pi$.
In fact, applying definition \eqref{eq:sdepth pi} to $\pi_k$ and $\pi$ gives
\begin{align*} 
{\sdepth}\,\pi_k
=&\ \min_{[X,Y']\in \pi_k}\! \#\{j: y'_{j}=1\}
\ge \min_{Y'=g(Y),\,[X,Y]\in \pi}\!\#\{j: y'_{j}=1\}\nonumber \\ 
=&\ \min_{[X,Y]\in \pi}\! \#\{j: y_j \wedge 1=1\}
\ge \min_{[X,Y]\in \pi} \#\{j: y_{j}=n_j\}\ =\ {\sdepth}\,\pi,
\end{align*}
as $n_j \wedge 1=1$.
Therefore
\begin{equation*}
{\sdepth}\,\llbracket S\rrbracket= \max_{\pi\in \mathfrak{P}(S)} {\sdepth}\,\pi
\le \max_{\pi\in \mathfrak{P}(S)}  {\sdepth}\,\pi_k\le {\sdepth}\,\llbracket S_k\rrbracket.
\end{equation*}

On the other hand, we say that an interval partition $\pi^*$ of $\llbracket S\rrbracket$ is {\it good} if each interval in $\pi^*$ has the form
\begin{equation}\label{eq:f}
[X,f(Y)]\ \ {\rm{with}}\ \ f(Y):=(n_1 y_1,\dots,n_k y_k)\ \ {\rm and}\ \ X,Y\in \{0,1\}^k,
\end{equation}
where $\{0,1\}^k\! :=\{(v_1,\dots, v_k): v_j \in \{0,1\}\}$;
namely, the components of the bottom element $X$ are either $0$ or $1$ and for each $j$, the $j$-th component of the top element $f(Y)$ is either $0$ or $n_j$ 
(in other words, every interval in $\pi^*$ is tall, in the sense that its bottom element  is a multiset with all multiplicities $0$ or $1$ and its top element is a multiset in which each multiplicity is $0$ or as large as possible). 
Then we have a bijection between $\mathfrak{P}(S_k)$ and the set of good interval partitions of $\llbracket S\rrbracket$, denoted by $\mathfrak{P}^*(S)$:
\begin{equation}\label{eq:pi*}
\pi_k=\bigcup_i\, [X_i,Y_i]\
\longmapsto\ \pi^*=\bigcup_i\, [X_i,f(Y_i)]
\end{equation}
(with the partition property verified right below). 
For instance, the induced interval partition $[01,01]\cup[10,11]$ in $\llbracket S_2\rrbracket$ of the partition in Fig.\,\ref{fig:1}(b) maps to $[01,03]\cup[10,23]$. 

Here to show that the $\pi^*$ defined in \eqref{eq:pi*} is indeed an interval partition of $\llbracket S\rrbracket$, 
we recall the order-preserving function $g(\cdot)$ from \eqref{eq:g}.
For any $[X_i,Y_i]\in \pi_k \in \mathfrak{P}(S_k)$ and 
$Z\in [X_i,f(Y_i)]$, we have
\begin{equation*}
g(Z)\in [g(X_i),g(f(Y_i))] = [X_i,Y_i].
\end{equation*}
Since the intervals $[X_i,Y_i]$ in $\pi_k$  are pairwise disjoint,  the intervals $[X_i,f(Y_i)]$ in  $\pi^*$ are pairwise disjoint as well. 
Further, for any $Z\in \llbracket S\rrbracket$, 
we notice that $g(Z)\in \llbracket S_k\rrbracket$, and thus
$g(Z)\in [X_i,Y_i]$ for some interval $[X_i,Y_i] \in \pi_k$. 
Recall from definition \eqref{eq:f} that the function $f(\cdot)$ is order-preserving. It follows that
\begin{equation*}
X_i\le g(Z) \le Z\le f(g(Z))\le f(Y_i);
\end{equation*}
in other words, we obtain $Z\in [X_i,f(Y_i)]\in \pi^*$.
Hence the union of elements in $\pi^*$ is $\llbracket S\rrbracket$, as desired.

Since the mapping \eqref{eq:pi*} keeps the Stanley depth unchanged, we deduce that
\begin{equation*}
{\sdepth}\,\llbracket S\rrbracket 
\ge \max_{\pi^*\in \mathfrak{P}^*(S)} {\sdepth}\,\pi^* 
=\max_{\pi_k\in \mathfrak{P}(S_k)} {\sdepth}\,\pi_k
= {\sdepth}\,\llbracket S_k\rrbracket
\end{equation*}
and complete the proof. 
\end{proof}

\section{Optimal Interval Partitions}
\label{sec:optimal}
Recall  that an interval partition of $\llbracket S\rrbracket$ is {\emph {good}} if each of its intervals has the form of \eqref{eq:f}.
In this section we prove that there exists a good interval partition with the maximal total depth (Theorem \ref{thm:good}) by adopting  the mappings \eqref{eq:pi_k} and \eqref{eq:pi*}. 
Therefore, in order to find ${\tdepth}\,\llbracket S\rrbracket$, it suffices to consider only the good interval partitions of $\llbracket S\rrbracket$. 

This result is crucial in the rest of this paper to finding (the bounds of) total depths.
In particular, if we know all the interval partitions of $\llbracket S_k\rrbracket$, we can transform them into good interval partitions of $\llbracket S\rrbracket$ via \eqref{eq:pi*}, and find ${\tdepth}\,\llbracket S\rrbracket$ by selecting the optimal from these good interval partitions.
This result also leads to the nondecreasing property of $\tdepth\,\llbracket n_1,\dots,n_k \rrbracket$ in every $n_i$ (Theorem \ref{thm:increase}).

\begin{defn}\label{def:optimal}
We say that an interval partition $\pi$ of $\llbracket S\rrbracket$ is  \emph{optimal} if ${\tdepth}\,\pi={\tdepth}\,\llbracket S\rrbracket$.
An optimal interval partition exists for any $\llbracket S\rrbracket$ since $\llbracket S\rrbracket$ is finite. 
\end{defn}

\begin{thm}\label{thm:good}
There exists an optimal interval partition of $\llbracket S\rrbracket$ such that it is good, i.e., each of its intervals has the form of \eqref{eq:f}.
\end{thm}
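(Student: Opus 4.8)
The plan is to take an arbitrary optimal interval partition $\pi$ of $\llbracket S\rrbracket$ and push it through the two maps constructed in the proof of Theorem~\ref{thm:n=s}: first $\pi \mapsto \pi_k$ via \eqref{eq:pi_k}, then $\pi_k \mapsto \pi^*$ via \eqref{eq:pi*}, landing in $\mathfrak{P}^*(S)$. The composite sends $[X_i,Y_i]$ to $[X_i, f(g(Y_i))]$, so all we have to do is check that the total depth does not drop along the way, i.e. $\tdepth\,\pi^* \ge \tdepth\,\pi$. Combined with $\tdepth\,\pi^* \le \tdepth\,\llbracket S\rrbracket = \tdepth\,\pi$, this forces $\pi^*$ to be optimal and good, which is exactly the claim. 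The partition property of $\pi_k$ and $\pi^*$ is already established inside the proof of Theorem~\ref{thm:n=s}, so I would just cite it rather than redo it.

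The heart of the matter is the height comparison for the top elements. I would first observe that the composite map replaces a top element $Y = (y_1,\dots,y_k)$ by $f(g(Y)) = (n_1\,(y_1 \wedge 1),\dots, n_k\,(y_k \wedge 1))$: in coordinate $j$, this is $n_j$ if $y_j \ge 1$ and $0$ if $y_j = 0$. Using \eqref{eq:height}, for an interval $[X,Y] \in \pi$ with $Y$ having support $T = \{j : y_j \ge 1\}$ we get $\height\,f(g(Y)) = \sum_{j \in T} n_j$, whereas $\height\,Y = \sum_{j \in T} y_j \le \sum_{j \in T} n_j$ since $y_j \le n_j$. Hence $\height\,f(g(Y)) \ge \height\,Y$ for every interval, so taking the minimum over all intervals in the respective partitions gives $\tdepth\,\pi^* = \min_{[X,Y] \in \pi} \height\,f(g(Y)) \ge \min_{[X,Y] \in \pi} \height\,Y = \tdepth\,\pi$, as wanted. (There is a minor bookkeeping point: distinct intervals of $\pi$ can collapse to the same interval of $\pi^*$ when empty induced intervals are dropped, but that only shrinks the index set of the minimum, which can only raise the minimum, so the inequality is unaffected.)

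Putting the pieces together: start with $\pi$ optimal, so $\tdepth\,\pi = \tdepth\,\llbracket S\rrbracket$; form $\pi^* \in \mathfrak{P}^*(S)$ as above; then
\[
\tdepth\,\llbracket S\rrbracket \ \ge\ \tdepth\,\pi^* \ \ge\ \tdepth\,\pi \ =\ \tdepth\,\llbracket S\rrbracket,
\]
so equality holds throughout and $\pi^*$ is an optimal good interval partition. I expect the only real obstacle to be making the second map rigorous on the level I want here — specifically, convincing oneself that $\pi^*$ genuinely covers all of $\llbracket S\rrbracket$ and is disjoint, for which the sandwich $X_i \le g(Z) \le Z \le f(g(Z)) \le f(Y_i)$ from the proof of Theorem~\ref{thm:n=s} does all the work. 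Everything else is the short height estimate above, so the proof is essentially a direct corollary of the machinery already set up.
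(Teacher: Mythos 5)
Your proposal is correct and follows essentially the same route as the paper: push an optimal $\pi$ through the maps \eqref{eq:pi_k} and \eqref{eq:pi*}, observe $y_j \le n_j(y_j\wedge 1)$ coordinatewise so heights of top elements can only increase, and note that dropping intervals with empty induced interval only shrinks the index set of the minimum. (The only nit: distinct intervals of $\pi$ cannot ``collapse to the same interval'' of $\pi^*$ since nonempty induced intervals stay pairwise disjoint — they are merely dropped — but your conclusion about the minimum is unaffected.)
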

\begin{proof}
Let $\pi$ be an optimal interval partition of $\llbracket S\rrbracket$.
Recall from \eqref{eq:pi_k} and \eqref{eq:pi*} the mappings (with $f$ and $g$ defined in \eqref{eq:f} and \eqref{eq:g}, respectively)
\begin{equation*}
\pi=\bigcup_i\, [X_i,Y_i]\
\longmapsto\ \pi_k=\bigcup_{X_i\le g(Y_i)} [X_i,g(Y_i)]\
\longmapsto\ \pi^*=\bigcup_{X_i\le g(Y_i)} [X_i,f(g(Y_i))]
\end{equation*}
from $\pi$ to its induced interval partition $\pi_k$ of $\llbracket S_k\rrbracket$, and then to $\pi_k$'s corresponding good interval partition $\pi^*$ of $\llbracket S\rrbracket$. 
We show that $\pi^*$ is optimal, i.e., ${\tdepth}\,\pi^* = {\tdepth}\,\llbracket S\rrbracket$, or equivalently,
${\tdepth}\,\pi \le {\tdepth} \ \pi^*$.

Consider the top elements of the intervals in $\pi$, $\pi_k$ and $\pi^*$. 
For each $[X,Y]\in \pi$ with $X\le g(Y)=:Y'=(y'_1,y'_2,\dots,y'_k)$,  where $y'_j=1\wedge y_j $ for all $j$,
if $y_j\ge1=y_j'$, then $y_j \le n_j=n_j y'_j$\,; otherwise, we have $y_j=0=y'_j=n_j y'_j$. 
Thus ${y_j}\le n_jy'_j$ always holds.
Combining this with  \eqref{eq:height} yields
$$
{\height}\,Y=\sum_{j=1}^k {y_j}\le \sum_{j=1}^k n_jy'_j= {\height}\,f(Y')={\height}\,f(g(Y)).
$$
Therefore by the definition \eqref{eq:tdepth pi} of total depth, we obtain 
\begin{align*}
{\tdepth}\,\pi =&\ \min_{[X,Y]\in \pi} {\height}\,Y \le \min_{[X,Y]\in \pi,\,X\le g(Y)} {\height}\,Y\\
\le&\ \min_{[X,Y]\in \pi,\,X\le g(Y)} {\height}\,f(g(Y))= {\tdepth} \ \pi^*,
\end{align*}
as desired.
\end{proof}

Thanks to Theorem \ref{thm:good}, we shall assume that all the partitions in the remainder of this paper are good.
We now show the following monotonicity result.

\begin{thm}\label{thm:increase}
{\rm (i)} The total depth of $\llbracket n_1,n_2,\dots,n_k \rrbracket$ is nondecreasing in every $n_i$, $i\in \{1,2,\dots,k\}$.
It follows that if $n_i\le n_i'$ for all $i\in \{1,2,\dots,k\}$, 
then we have 
$
{\tdepth}\,\llbracket n_1,n_2,\dots,n_k \rrbracket \le {\tdepth}\,\llbracket n_1',n_2',\dots, n_k'\rrbracket.
$

\smallskip
\noindent
{\rm (ii)} For $k\ge 2$, we have 
$
{\tdepth}\,\llbracket n_2,\dots,n_k\rrbracket\le {\tdepth}\,\llbracket n_1,n_2,\dots,n_k \rrbracket.
$
\end{thm}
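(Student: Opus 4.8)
The plan is to prove both monotonicity statements by exhibiting, for each partition of the smaller poset, a corresponding partition of the larger poset whose total depth is at least as large, and then taking the max over partitions. By Theorem~\ref{thm:good} I may restrict attention to good interval partitions throughout, which makes the bookkeeping much cleaner since every interval is determined by a pair $X,Y\in\{0,1\}^k$ via $[X,f(Y)]$.

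For part (ii), I would start with a good optimal interval partition $\pi$ of $\llbracket n_2,\dots,n_k\rrbracket$ and ``lift'' it to $\llbracket n_1,n_2,\dots,n_k\rrbracket$ by stratifying along the new first coordinate. Concretely, write each element $Z\in\llbracket n_1,\dots,n_k\rrbracket$ as $(z_1,Z')$ with $z_1\in\{0,1,\dots,n_1\}$ and $Z'$ a submultiset of $\{2^{n_2},\dots,k^{n_k}\}$ (allowing $Z'=\bm 0$ only when $z_1>0$). For each interval $[X',f(Y')]\in\pi$ of the small poset, create $n_1+1$ intervals in the big poset: for $z_1=0$ take $[(0,X'),(0,f(Y'))]$ and for $z_1\ge 1$ take the ``tall'' interval $[(1,X'),(n_1,f(Y'))]$ — collapsing the layers $z_1=1,\dots,n_1$ into a single interval in the new coordinate. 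One checks these intervals are pairwise disjoint and cover $\llbracket n_1,\dots,n_k\rrbracket$ (the $z_1=0$ slabs cover $\llbracket n_2,\dots,n_k\rrbracket$, and the $z_1\ge1$ slabs cover everything with $z_1\ge1$, including the elements $(z_1,\bm 0)$ which lie in the interval coming from the bottom element's block). The heights of the new top elements are either $\height f(Y')$ (for the $z_1=0$ copy) or $n_1+\height f(Y')$ (for the tall copy), so the minimum height is $\min_{[X',f(Y')]\in\pi}\height f(Y')=\tdepth\pi=\tdepth\llbracket n_2,\dots,n_k\rrbracket$, giving $\tdepth\llbracket n_1,\dots,n_k\rrbracket\ge\tdepth\pi$.

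For part (i), by symmetry of coordinates it suffices to show $\tdepth\llbracket n_1,n_2,\dots,n_k\rrbracket\le\tdepth\llbracket n_1+1,n_2,\dots,n_k\rrbracket$. Again take a good optimal partition $\pi$ of $\llbracket n_1,\dots,n_k\rrbracket$; each interval has first coordinate running from $0$ to $0$ or from $1$ to $n_1$. I would lift it to $\llbracket n_1+1,n_2,\dots,n_k\rrbracket$ by replacing each interval $[X,f(Y)]$ whose first coordinate block is $[1,n_1]$ with $[X,(n_1+1,f(Y)_2,\dots,f(Y)_k)]$, i.e.\ stretching the top of the new coordinate from $n_1$ to $n_1+1$, and leaving the $z_1=0$ intervals untouched. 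The new elements of $\llbracket n_1+1,\dots,n_k\rrbracket$ (those with first coordinate exactly $n_1+1$) are absorbed into these stretched intervals, so this is still an interval partition, and it is good. Each stretched top element has height increased by $1$, and the untouched $z_1=0$ tops are unchanged, so the new total depth is $\ge\tdepth\pi$; this proves the increment inequality, and iterating over coordinates and increments gives the full monotonicity statement, with the final ``it follows that'' claim immediate by composing one-step increases.

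The main obstacle is verifying the partition property in each lift — in particular, making sure that elements with the new extremal coordinate value (the $z_1\ge1$ or $z_1=n_1+1$ elements, including those of the form $(z_1,\bm 0)$ with $Z'$ empty) are covered exactly once. The key observation is that in a good partition the block containing the bottom element $\bm 0$ of the small poset has bottom element $\bm 0$ and its top element has first coordinate $0$ or $n_1$; one needs to chase which block the elements with empty ``tail'' $Z'$ fall into and check no overlap is created, using that $g$ is order-preserving as in the proof of Theorem~\ref{thm:good}. This is routine but the case analysis on whether the first-coordinate block of each interval is $\{0\}$ or $[1,n_1]$ must be handled carefully, and I would present it as a short lemma about lifting good partitions rather than repeating the argument twice.
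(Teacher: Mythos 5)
Your construction for part (ii) has a genuine gap. The poset $\llbracket n_2,\dots,n_k\rrbracket$ consists of \emph{nonempty} submultisets, so it does not contain $\bm{0}$ and no block of $\pi$ has bottom element $\bm{0}$; the ``interval coming from the bottom element's block'' that you invoke to cover the elements $(z_1,\bm 0)$ does not exist, and your ``key observation'' about that block is vacuous. Consequently the elements $\{1\},\{1^2\},\dots,\{1^{n_1}\}$ of $\llbracket n_1,\dots,n_k\rrbracket$ are covered by none of your lifted intervals. The patch is not routine: adding $[\{1\},\{1^{n_1}\}]$ as a separate block introduces a top element of height $n_1$, which can be strictly smaller than $\tdepth\,\llbracket n_2,\dots,n_k\rrbracket$ (part (ii) imposes no ordering on $n_1$ relative to the other multiplicities), so it can destroy the inequality. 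The paper avoids stratifying the $z_1\ge 1$ region at all: it keeps $\pi$ verbatim on the slab $z_1=0$ and covers the entire region $z_1\ge1$ by the single interval $[\{1\},S]$, whose top element $S$ has maximal height. That is the repair you want.

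Your part (i) construction is essentially sound and is a genuinely different, more hands-on route than the paper's, but your case split on the first-coordinate block being $\{0\}$ or $[1,n_1]$ omits good intervals whose first-coordinate block is $[0,n_1]$ (bottom coordinate $0$, top coordinate $n_1$), e.g.\ the block $[01,23]$ in the partition $[01,23]\cup[10,20]$ of $\llbracket 2,3\rrbracket$; such intervals are neither ``$z_1=0$ intervals'' nor of block type $[1,n_1]$, and if left unstretched the new elements above them go uncovered. The correct rule is to stretch every interval whose \emph{top} has first coordinate $n_1$, regardless of its bottom. The paper's proof of (i) bypasses all of this: by Theorem \ref{thm:good} and the bijection \eqref{eq:pi*},
$\tdepth\,\llbracket S\rrbracket=\max_{\pi_k\in\mathfrak{P}(S_k)}\min_{[X,Y]\in\pi_k}\sum_{j=1}^{k}n_jy_j$,
and since $\mathfrak{P}(S_k)$ is independent of the multiplicities and each $y_j\ge0$, this is manifestly nondecreasing in every $n_i$ --- which also yields the ``it follows'' clause in one step, with no iteration over unit increments.
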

\begin{proof}
(i) We adopt the notation $\llbracket S\rrbracket =\llbracket n_1,n_2,\dots,n_k \rrbracket$. 
On the strength of Theorem \ref{thm:good}, bijection \eqref{eq:pi*} and definitions \eqref{eq:tdepth pi}, \eqref{eq:height}  and \eqref{eq:f}, we obtain
\begin{align*}
{\tdepth}\,\llbracket S\rrbracket 
&= \max_{\pi^*\in \mathfrak{P}^*(S)} {\tdepth}\,\pi^* 
= \max_{\pi_k\in \mathfrak{P}(S_k)} 
\min_{[X,Y]\in \pi_k} {\height}\,f(Y) \\
&=\ \max_{\pi_k\in \mathfrak{P}(S_k)} 
\min_{[X,Y]\in \pi_k} \sum_{j=1}^k n_j y_{j},
\end{align*}
which is nondecreasing in every $n_i$. 

(ii) Let  $\pi=\bigcup_i\, [X_i,Y_i]$ be an optimal interval partition of $\llbracket n_2,\dots,n_k\rrbracket$. 
Note that $\pi':=[\{1\}, S] \bigcup \left(\bigcup_i\, [X_i,Y_i]\right)$ is an interval partition of $\llbracket S\rrbracket$ and has the same total depth as $\pi$ since $\height\, S\ge \height\, Y_i$ for all $i$. 
The conclusion then follows from Definitions \ref{def:optimal} and \ref{def:tdepth} (equation \eqref{eq:tdepth P}).
\end{proof}





\section{\texorpdfstring{The Total Depth of $\bm{n}^k\,\backslash \{\bm{0}\}$ and Bounds for Total Depth}{The Total Depth of n\^{}k\textbackslash \{0\} and Bounds for Total Depth}}
\label{sec:n^k}

Recall that 
${\sdepth}\,\llbracket S_k\rrbracket=\lceil {k/ 2}\rceil$ (\cite{B}), and therefore  ${\tdepth}\,\llbracket S_k\rrbracket=\lceil {k/ 2}\rceil$ by \eqref{eq:S_k}.
\begin{thm}{\cite[Theorem 2.2]{B}}\label{thm:2^n}
For $k\ge 1$, 
we have  
$${\tdepth}\,\bm{2}^k\,\backslash \{\bm{0}\} ={\tdepth}\,\llbracket 1,1,\dots,1\rrbracket=\lceil {k/ 2}\rceil.$$
\end{thm}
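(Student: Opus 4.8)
The plan is to isolate and prove the one substantive assertion. The equality ${\tdepth}\,\bm{2}^k\,\backslash\{\bm{0}\}={\tdepth}\,\llbracket 1,1,\dots,1\rrbracket$ is automatic, since $\bm{2}^k$ is nothing but the poset of all subsets of $S_k=\{1,\dots,k\}$ ordered by inclusion, with $\bm{0}$ corresponding to $\emptyset$, so the two posets coincide; thus everything reduces to showing ${\tdepth}\,\llbracket S_k\rrbracket=\lceil k/2\rceil$. One can read this off immediately from \eqref{eq:S_k} together with the identity ${\sdepth}\,\llbracket S_k\rrbracket=\lceil k/2\rceil$ of \cite{B}, but I would prefer a direct argument phrased in terms of heights, recalling from \eqref{eq:depthS_k} that in $\llbracket S_k\rrbracket$ the height of an element equals its cardinality. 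So the target is: the largest $d$ admitting an interval partition of $\bm{2}^k\,\backslash\{\bm{0}\}$ all of whose intervals $[X,Y]$ have $|Y|\ge d$ is exactly $\lceil k/2\rceil$.

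For the lower bound ${\tdepth}\,\llbracket S_k\rrbracket\ge\lceil k/2\rceil$ I would invoke the theorem of de Bruijn, Tengbergen and Kruyswijk that $\bm{2}^k$ has a symmetric chain decomposition, i.e., a partition into chains, each running through the consecutive ranks $i,i+1,\dots,k-i$ for some $0\le i\le\lfloor k/2\rfloor$. Each such chain is an interval, and its top has cardinality $k-i\ge\lceil k/2\rceil$. Exactly one chain contains $\emptyset$ (a maximal chain $\emptyset\subsetneq\{a_1\}\subsetneq\cdots\subsetneq S_k$); deleting $\emptyset$ replaces it by the interval $[\{a_1\},S_k]$, whose top $S_k$ has cardinality $k\ge\lceil k/2\rceil$, and leaves every other chain unchanged. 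This yields an interval partition of $\bm{2}^k\,\backslash\{\bm{0}\}$ whose every top has height at least $\lceil k/2\rceil$, giving the bound.

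For the upper bound ${\tdepth}\,\llbracket S_k\rrbracket\le\lceil k/2\rceil$, let $\pi$ be an arbitrary interval partition of $\bm{2}^k\,\backslash\{\bm{0}\}$ and set $d={\tdepth}\,\pi$, which by \eqref{eq:depthS_k} equals $\min_{[X,Y]\in\pi}|Y|$. Since $\emptyset$ is absent, the member of $\pi$ containing a singleton $\{i\}$ must have bottom exactly $\{i\}$; call it $[\{i\},Y_i]$. No interval can contain two distinct singletons (its bottom would be contained in both, hence empty), so $[\{1\},Y_1],\dots,[\{k\},Y_k]$ are $k$ distinct intervals of $\pi$, and $|Y_i|\ge d$ for each $i$. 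Furthermore, if $j\in Y_i$ and $i\in Y_j$ for some $i\ne j$, then $\{i,j\}$ would belong to both $[\{i\},Y_i]$ and $[\{j\},Y_j]$, contradicting disjointness. Therefore the digraph on $S_k$ with an arc $i\to j$ whenever $i\ne j$ and $j\in Y_i$ has no loops and no directed $2$-cycles, so each unordered pair of vertices carries at most one arc and the total number of arcs is at most $\binom{k}{2}$. Counting arcs by out-degree gives $\sum_{i=1}^k(|Y_i|-1)\le\binom{k}{2}$, hence $k(d-1)\le\binom{k}{2}$ and $d\le(k+1)/2$; since $d$ is an integer, $d\le\lceil k/2\rceil$. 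As $\pi$ was arbitrary, ${\tdepth}\,\llbracket S_k\rrbracket\le\lceil k/2\rceil$, and combined with the previous paragraph this proves the theorem.

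I expect the upper bound to be the only real difficulty, and within it the single idea that must be spotted is that the forced singleton bottoms together with the disjointness of the parts exactly encode a loopless, digon-free digraph; once that translation is made, the trivial bound $\binom{k}{2}$ on its edge count does the rest, and the constant $\lceil k/2\rceil$ is seen to come from dividing by $k$. The lower bound is then routine, because symmetric chains automatically have tops of cardinality $\ge\lceil k/2\rceil$ and discarding $\bm{0}$ only shortens one chain.
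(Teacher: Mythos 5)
Your upper bound is correct. The observations that each singleton $\{i\}$ must be the bottom of its own interval $[\{i\},Y_i]$, that distinct singletons lie in distinct intervals, and that $j\in Y_i$ and $i\in Y_j$ cannot hold simultaneously are exactly the content of Lemma \ref{lemma1}, and your arc count $\sum_{i=1}^k(|Y_i|-1)\le\binom{k}{2}$ is the case $n_1=\cdots=n_k=1$ of the averaging argument behind \eqref{eq:in_i}, giving $d\le\lfloor(k+1)/2\rfloor=\lceil k/2\rceil$. (Note the paper does not reprove this theorem: it quotes \cite[Theorem 2.2]{B} for ${\sdepth}\,\llbracket S_k\rrbracket=\lceil k/2\rceil$ and converts via \eqref{eq:S_k}.)

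The lower bound, however, has a genuine gap. A symmetric chain $X_i\subsetneq X_{i+1}\subsetneq\cdots\subsetneq X_{k-i}$ is \emph{not} an interval of the Boolean lattice: in the sense of Definition \ref{def:pi}, the interval $[X_i,X_{k-i}]$ consists of all $2^{k-2i}$ sets $Z$ with $X_i\subseteq Z\subseteq X_{k-i}$, whereas the chain has only $k-2i+1$ elements, so the two coincide only when $k-2i\le 1$. Already for $k=3$ the chain $\{1\}\subset\{1,2\}\subset\{1,2,3\}$ omits $\{1,3\}\in[\{1\},\{1,2,3\}]$. Hence the de Bruijn--Tengbergen--Kruyswijk decomposition is a chain partition but not an interval partition, and it cannot be substituted into \eqref{eq:tdepth P}. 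Constructing an interval partition of $\llbracket S_k\rrbracket$ all of whose tops have cardinality at least $\lceil k/2\rceil$ is precisely the hard content of \cite[Theorem 2.2]{B} (compare Fig.\,\ref{fig:1}(a) for $k=3$); no quick replacement is available, and your proposal does not supply one.
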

We take advantage of this result to derive a pair of upper and lower bounds for the total depth for a general multiset. 

\begin{thm}\label{thm:k/2}
For $n_1 \le n_2 \le \cdots \le n_k$, we have
\begin{equation*}
\sum_{i=1}^{\lceil {k/ 2}\rceil} n_i \le {\tdepth\,} \llbracket n_1,n_2,\dots,n_k \rrbracket \le \sum_{i=k-\lceil {k/ 2}\rceil+1}^k n_i\,.
\end{equation*}
\end{thm}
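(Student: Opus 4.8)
\textbf{Proof plan for Theorem \ref{thm:k/2}.}

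The plan is to reduce both bounds to the known value ${\tdepth}\,\llbracket S_k\rrbracket = \lceil k/2\rceil$ from Theorem \ref{thm:2^n}, using the reformulation
\[
{\tdepth}\,\llbracket n_1,\dots,n_k\rrbracket \;=\; \max_{\pi_k\in\mathfrak{P}(S_k)}\ \min_{[X,Y]\in\pi_k}\ \sum_{j=1}^k n_j y_j
\]
established in the proof of Theorem \ref{thm:increase}(i). The point is that an optimal interval partition of $\llbracket S_k\rrbracket$ has $\min_{[X,Y]\in\pi_k}\#\{j: y_j=1\} = \lceil k/2\rceil$, and for each such top element $Y$ the weighted sum $\sum n_j y_j$ is a sum of exactly (at least) $\lceil k/2\rceil$ of the $n_j$'s.

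For the \emph{lower bound}: take a partition $\pi_k\in\mathfrak{P}(S_k)$ achieving Stanley depth $\lceil k/2\rceil$, so every top element $Y$ has $\#\{j: y_j=1\}\ge \lceil k/2\rceil$. Since the $n_i$ are indexed in nondecreasing order, any sum of $\lceil k/2\rceil$ (or more) of the $n_j$'s is at least the sum $\sum_{i=1}^{\lceil k/2\rceil} n_i$ of the $\lceil k/2\rceil$ smallest ones. Hence ${\tdepth}\,\pi_k^* \ge \sum_{i=1}^{\lceil k/2\rceil} n_i$ (passing to the good partition $\pi_k^*$ via \eqref{eq:pi*}), which gives the left inequality. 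For the \emph{upper bound}: let $\pi$ be an optimal \emph{good} partition of $\llbracket S\rrbracket$, which exists by Theorem \ref{thm:good}, with corresponding $\pi_k\in\mathfrak{P}(S_k)$. Its induced Stanley depth satisfies $\min_{[X,Y]\in\pi_k}\#\{j: y_j=1\} \le {\sdepth}\,\llbracket S_k\rrbracket = \lceil k/2\rceil$, so there is some interval $[X,Y]\in\pi_k$ whose top element has at most $\lceil k/2\rceil$ coordinates equal to $1$. For that interval, $\sum_j n_j y_j$ is a sum of at most $\lceil k/2\rceil$ of the $n_j$'s, and any such sum is at most the sum of the $\lceil k/2\rceil$ \emph{largest} ones, namely $\sum_{i=k-\lceil k/2\rceil+1}^k n_i$. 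Therefore ${\tdepth}\,\llbracket S\rrbracket = {\tdepth}\,\pi = \min_{[X,Y]\in\pi_k}\sum_j n_j y_j \le \sum_{i=k-\lceil k/2\rceil+1}^k n_i$, the right inequality.

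The main thing to be careful about is the passage between the three layers — the partition $\pi$ of $\llbracket S\rrbracket$, the induced partition $\pi_k$ of $\llbracket S_k\rrbracket$, and the good partition $\pi^*$ — and making sure the height of a top element $f(Y)$ in $\llbracket S\rrbracket$ really equals $\sum_j n_j y_j$, which is exactly \eqref{eq:height} applied to \eqref{eq:f}. I expect no genuine obstacle here: once the reformulation from the proof of Theorem \ref{thm:increase}(i) is in hand, both inequalities are elementary statements about sums of the $k$ numbers $n_1\le\cdots\le n_k$ taken $\lceil k/2\rceil$ at a time, combined with the fact that the extremal count $\lceil k/2\rceil$ of "full" coordinates is forced by ${\sdepth}\,\llbracket S_k\rrbracket=\lceil k/2\rceil$ in one direction and achievable in the other.
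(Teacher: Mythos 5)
Your proposal is correct and follows essentially the same route as the paper: reduce to good partitions via Theorem \ref{thm:good} and the bijection \eqref{eq:pi*}, invoke ${\tdepth}\,\llbracket S_k\rrbracket=\lceil k/2\rceil$ from Theorem \ref{thm:2^n} to control the number of coordinates of a top element equal to $1$, and then bound $\sum_j n_j y_j$ by the $\lceil k/2\rceil$ smallest (resp.\ largest) of the $n_i$ using the ordering $n_1\le\cdots\le n_k$. No gaps; this matches the paper's argument.
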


When all $n_i$'s equal $n-1$, 
the upper and lower bounds coincide, which gives the total depth of
$\bm{n}^k\,\backslash \{\bm{0}\}$.

\begin{cor}\label{coro:n^k}
For $n\ge 2$, we have  
$${\tdepth}\ \bm{n}^k\,\backslash \{\bm{0}\} 
= {\tdepth}\, \llbracket n-1,n-1,\dots,n-1\rrbracket
=(n-1)\lceil {k/ 2}\rceil.$$
\end{cor}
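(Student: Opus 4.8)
The plan is to deduce Corollary~\ref{coro:n^k} directly from Theorem~\ref{thm:k/2} by specializing to the case where every multiplicity equals $n-1$. First I would set $n_i = n-1$ for all $i \in \{1,2,\dots,k\}$; this sequence is (weakly) increasing, so the hypothesis $n_1 \le n_2 \le \cdots \le n_k$ of Theorem~\ref{thm:k/2} is satisfied and the theorem applies. The lower bound then reads $\sum_{i=1}^{\lceil k/2\rceil} (n-1) = (n-1)\lceil k/2\rceil$, and the upper bound reads $\sum_{i=k-\lceil k/2\rceil+1}^{k} (n-1) = (n-1)\lceil k/2\rceil$ as well, since both sums have exactly $\lceil k/2\rceil$ terms each equal to $n-1$. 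Hence the two bounds coincide and force ${\tdepth}\,\llbracket n-1,n-1,\dots,n-1\rrbracket = (n-1)\lceil k/2\rceil$.

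Next I would record the identification ${\tdepth}\,\bm{n}^k\,\backslash\{\bm{0}\} = {\tdepth}\,\llbracket n-1,n-1,\dots,n-1\rrbracket$. This is not a computation but an unwinding of definitions: by the discussion around \eqref{eq:U}, the poset $\bm{n}^k\,\backslash\{\bm{0}\}$ is the product of $k$ copies of the $n$-element chain $0 < 1 < \cdots < n-1$ with its bottom element removed, which is exactly the poset of nonempty submultisets of the multiset $\{1^{n-1}, 2^{n-1}, \dots, k^{n-1}\}$ ordered by inclusion, i.e.\ $\llbracket n-1,n-1,\dots,n-1\rrbracket$ in the paper's notation. (One should be mildly careful about the off-by-one: the chain $\bm{n}$ has $n$ elements, so it corresponds to multiplicity $n-1$, matching the statement. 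For $n \ge 2$ this multiplicity is $\ge 1$, so the poset is nonempty and all invoked results apply.) Combining this identification with the previous paragraph yields the claimed value $(n-1)\lceil k/2\rceil$.

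There is essentially no obstacle here: the corollary is a one-line specialization of Theorem~\ref{thm:k/2} together with a definitional translation. The only thing worth stating explicitly for the reader is the sanity check that the lower and upper summation ranges in Theorem~\ref{thm:k/2}, namely $i \in \{1,\dots,\lceil k/2\rceil\}$ and $i \in \{k-\lceil k/2\rceil+1,\dots,k\}$, each contain precisely $\lceil k/2\rceil$ integers, so that when all summands are the constant $n-1$ both sides collapse to the same number regardless of how $\lceil k/2\rceil$ relates to $\lfloor k/2\rfloor$. If desired, one could alternatively note that this recovers Theorem~\ref{thm:2^n} when $n=2$, since then $(n-1)\lceil k/2\rceil = \lceil k/2\rceil$, which is a consistency check rather than part of the proof. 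The whole argument fits comfortably in a few lines.

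\begin{proof}
By the correspondence described after \eqref{eq:U}, the poset $\bm{n}^k\,\backslash\{\bm{0}\}$ is the product of $k$ copies of the $n$-element chain with its bottom element deleted, which is precisely the poset $\llbracket S\rrbracket$ of nonempty submultisets of $S=\{1^{n-1}, 2^{n-1}, \dots, k^{n-1}\}$; that is,
\[
{\tdepth}\,\bm{n}^k\,\backslash\{\bm{0}\} = {\tdepth}\,\llbracket n-1,n-1,\dots,n-1\rrbracket.
\]
Since $n \ge 2$, we have $n-1 \ge 1$, and the (constant, hence weakly increasing) sequence $n_1 = n_2 = \cdots = n_k = n-1$ satisfies the hypothesis of Theorem~\ref{thm:k/2}. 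Applying that theorem, the lower bound is
\[
\sum_{i=1}^{\lceil k/2\rceil} n_i = \sum_{i=1}^{\lceil k/2\rceil} (n-1) = (n-1)\left\lceil \frac{k}{2}\right\rceil,
\]
and the upper bound is
\[
\sum_{i=k-\lceil k/2\rceil+1}^{k} n_i = \sum_{i=k-\lceil k/2\rceil+1}^{k} (n-1) = (n-1)\left\lceil \frac{k}{2}\right\rceil,
\]
because each summation range consists of exactly $\lceil k/2\rceil$ indices. Hence the two bounds coincide, forcing
\[
{\tdepth}\,\llbracket n-1,n-1,\dots,n-1\rrbracket = (n-1)\left\lceil \frac{k}{2}\right\rceil,
\]
and the claim follows.
\end{proof}
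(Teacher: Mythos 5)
Your proposal is correct and matches the paper's own derivation: the paper obtains the corollary precisely by noting that when all $n_i$ equal $n-1$, the upper and lower bounds of Theorem \ref{thm:k/2} coincide at $(n-1)\lceil k/2\rceil$, together with the identification of $\bm{n}^k\,\backslash\{\bm{0}\}$ with $\llbracket n-1,\dots,n-1\rrbracket$ from the discussion around \eqref{eq:U}. Your explicit check that both summation ranges contain exactly $\lceil k/2\rceil$ indices is a harmless elaboration of the same argument.
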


\begin{proof}[Proof of Theorem \ref{thm:k/2}]
{\it Upper bound.}
Let $\pi^*$ be an optimal interval partition of $\llbracket S\rrbracket$. 
Thanks to Theorem \ref{thm:good}, we can assume that $\pi^*$ is good with $\pi^*=\{[X,f(Y)]:[X,Y]\in \pi_k\}$ as in \eqref{eq:pi*}, 
where $\pi_k$ is its induced interval partition of $\llbracket S_k\rrbracket$. 
Since ${\tdepth\,} \pi_k\le {\tdepth\,} \llbracket S_k\rrbracket= \lceil {k/ 2}\rceil$ by \eqref{eq:tdepth P} and Theorem \ref{thm:2^n}, 
it follows from \eqref{eq:tdepth pi} 
that there exists $Y'\!$ with $[X,Y']\in \pi_k$ such that ${\height}\,Y'\! \le \lceil {k/ 2}\rceil$, i.e., at most $\lceil {k/ 2}\rceil$ of the $y'_i$'s are $1$ (recall \eqref{eq:depthS_k}). 
Thus 
$${\tdepth}\,\llbracket S\rrbracket 
= {\tdepth}\,\pi^*
\le {\height}\,f(Y')
=\sum_{i=1}^k n_i y'_i
\le \sum_{i=k-\lceil {k/ 2}\rceil+1}^k n_i\,.$$

{\it Lower bound.}
Let $\pi_k$ be an optimal interval partition of $\llbracket S_k\rrbracket$ and $\pi^*=\{[X,f(Y)]:[X,Y]\in \pi_k\}$ its corresponding good interval partition of $\llbracket S\rrbracket$ as in \eqref{eq:pi*}. 
Thanks to Theorem \ref{thm:2^n}, we have 
${\tdepth\,} \pi_k={\tdepth\,} \llbracket S_k\rrbracket=\lceil {k/ 2}\rceil$. 
Thus ${\height}\,Y\ge \lceil {k/ 2}\rceil$ for any interval $[X,Y]\in \pi_k$ by \eqref{eq:tdepth pi}, i.e., 
at least $\lceil {k/ 2}\rceil$ of the $y_i$'s are $1$  (recall \eqref{eq:depthS_k}). 
Therefore
${\height}\,f(Y)=\sum_{i=1}^k n_i y_i\ge \sum_{i=1}^{\lceil {k/ 2}\rceil} n_i$. 
Hence 
$$\qquad \qquad \quad 
{\tdepth}\,\llbracket S\rrbracket \ge {\tdepth}\,\pi^*
=\min_{[X,Y]\in \pi_k}  {\height}\,f(Y)
\ge\sum_{i=1}^{\lceil {k/ 2}\rceil} n_i\,.\qquad \qedhere
$$
\end{proof}

\section{More Bounds for Total Depth}\label{sec:bounds}

In this section we derive more bounds for the total depth. They are stronger than those in Theorem \ref{thm:k/2} in many (but not all) cases, for instance, when $k$ is odd or $n_k$ is big, or when $k$ is even and $n_1,\dots,n_{k-1}$ are not too close to each other.
We prove our results by analyzing the top elements of the intervals that contain one of these submultisets: $\{1\},\dots,\{k\}$, i.e.,
$(1,0,\dots,0),\dots,(0,\dots,0,1)$.
This approach also plays an important role in computing total depths in Section \ref{sec:k=5}.

\begin{thm}\label{thm:n_k}
For $1\le n_1\le n_2\le \cdots \le n_k$, we have
\begin{equation}\label{eq:n_k}
n_k \le {\tdepth\,} \llbracket n_1,n_2,\dots,n_k \rrbracket 
\le \max\{n_k, n_1+n_2+\cdots+n_{k-1}\},
\end{equation}
and
\begin{equation}\label{eq:in_i}
{\tdepth\,} \llbracket n_1,n_2,\dots,n_k \rrbracket \le \left\lfloor{\frac{1}{k}\sum_{i=1}^k i n_i}\right\rfloor.
\end{equation}
\end{thm}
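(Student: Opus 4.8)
The plan is to work entirely with good interval partitions (Theorem \ref{thm:good}), so that every partition $\pi^*$ of $\llbracket S\rrbracket$ comes from an interval partition $\pi_k$ of $\llbracket S_k\rrbracket$ via \eqref{eq:pi*}, and ${\tdepth}\,\pi^* = \min_{[X,Y]\in\pi_k} \sum_j n_j y_j$. For the lower bound $n_k \le {\tdepth}\,\llbracket S\rrbracket$, I would exhibit a single good partition: take the interval partition of $\llbracket S_k\rrbracket$ that pairs $\{i\}$ with $\{i,k\}$ for $i=1,\dots,k-1$ and leaves $[\{k\},\{k\}]$ — wait, this is not a partition of $\llbracket S_k\rrbracket$. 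More carefully, the cleanest choice is the partition of $\llbracket S_k \rrbracket$ into the single interval $[\{k\}, S_k]$ together with a partition of $\llbracket S_{k-1}\rrbracket$ lifted up; pushing through \eqref{eq:pi*}, every top element $f(Y)$ then has $k$-th coordinate $n_k$ except for those intervals lying inside $\llbracket n_1,\dots,n_{k-1}\rrbracket$. Using Theorem \ref{thm:increase}(ii) and induction on $k$, or simply observing that the interval $[\{k\},S_k]$ can be paired off so that each remaining interval of $\pi_k$ contains a coordinate where $y_j=1$ for some $j$ with $n_j$ as large as we can force, gives ${\height}\,f(Y)\ge n_k$ throughout. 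I expect the slickest route is: the partition $\{[\{k\},S_k]\} \cup \pi_{k-1}'$ of $\llbracket S_k\rrbracket$, where $\pi_{k-1}'$ is an optimal partition of $\llbracket S_{k-1}\rrbracket$ viewed inside $\llbracket S_k\rrbracket$, yields ${\tdepth}\ge \min\{n_k,\ \tdepth\,\llbracket n_1,\dots,n_{k-1}\rrbracket\}$, and then one iterates, but since $n_k$ is the largest multiplicity a direct argument that each non-top interval still captures at least $n_1$ worth of height and can be chained up to $n_k$ should close it; I will present the explicit partition.

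For the upper bound in \eqref{eq:n_k}, I would analyze the interval of $\pi_k$ containing the atom $\{k\}=(0,\dots,0,1)$ and the interval containing $\{1\}=(1,0,\dots,0)$. Let $[X,Y]\ni \{k\}$ in $\pi_k$. Either $Y=(0,\dots,0,1)$, in which case ${\height}\,f(Y)=n_k$ and so ${\tdepth}\,\pi^*\le n_k$; or $Y$ has some other coordinate equal to $1$. If instead the interval containing $\{1\}$ is forced (by disjointness, since $\{1\}$ and $\{k\}$ can't be in the same interval unless that interval is large) to have its top element miss coordinate $k$, then ${\height}\,f(Y)\le n_1+\cdots+n_{k-1}$. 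The key combinatorial point is that $\{1\}$ and $\{k\}$ cannot both lie in intervals whose tops are all-of-$S$-like without one of these two cheap bounds kicking in; making this dichotomy airtight — covering the case where $\{1\}$ and $\{k\}$ sit in the \emph{same} interval $[X,Y]$ with $X=\bm 0$, forcing $X$ to have a zero coordinate so the bottom is not $\bm 0$, which is impossible in a good partition unless... — is where I expect the real work. I would set it up as: consider the interval $[X,Y]$ containing $\{k\}$; its bottom $X\in\{0,1\}^k$ satisfies $X\le(0,\dots,0,1)$, so $X=\bm 0$ or $X=(0,\dots,0,1)$. In the first case $[X,Y]=[\bm 0,f(Y)]$ must be \emph{the} interval containing $\bm 0$; but $\bm 0 \notin \llbracket S\rrbracket$, so actually $X\ne \bm 0$, hence $X=(0,\dots,0,1)$ and $Y\ge(0,\dots,0,1)$ with $y_k=1$. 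A symmetric look at $\{1\}$ then yields the max of the two bounds. The main obstacle is handling the interaction/overlap cases cleanly.

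For \eqref{eq:in_i} I would average: for a good partition coming from $\pi_k$, each $[X,Y]\in\pi_k$ has ${\height}\,f(Y)=\sum_j n_j y_j$, and I want to show $\min_{[X,Y]\in\pi_k}\sum_j n_j y_j \le \lfloor \frac1k\sum_{i} i n_i\rfloor$. The idea is to bound the total $\sum_{[X,Y]\in\pi_k}\sum_j n_j y_j$ against the number $|\pi_k|$ of intervals and use that the minimum is at most the average. Since the intervals partition $\llbracket S_k\rrbracket$, I would instead run the averaging over the $k$ atoms $\{1\},\dots,\{k\}$: the atom $\{i\}$ lies in some interval of $\pi_k$ whose top $Y^{(i)}$ satisfies $y^{(i)}_i=1$, hence ${\height}\,f(Y^{(i)})\ge n_i$, but I need an \emph{upper} bound on the minimum, so I reverse this — choose an ordering and bound how large all the relevant tops can simultaneously be. Concretely, since $n_1\le\dots\le n_k$ and $f(Y)$ uses only $0$'s and $n_j$'s, the weight $\sum_j n_j y_j$ for an interval containing $\{i\}$ can be bounded, after summing over $i=1,\dots,k$ and using that each element of $\llbracket S_k\rrbracket$ (in particular each atom) lies in exactly one interval, by $\sum_{i=1}^k i\,n_i$ via a rank/injectivity argument on which atoms feed which intervals; dividing by $k$ and taking the floor (the quantity is an integer) gives the bound on the minimum. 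I anticipate that pinning down the precise counting inequality $\sum_{\text{atoms }i} {\height}\,f(Y^{(i)}) \le \sum_i i\,n_i$ — essentially that the tops, sorted, are dominated coordinatewise by the "staircase" — is the crux, and I would prove it by a greedy/exchange argument on the good partition structure.
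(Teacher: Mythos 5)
Your overall strategy matches the paper's (restrict to good partitions, analyze the intervals containing the atoms $\{1\},\dots,\{k\}$, exhibit one explicit partition for the lower bound, average for \eqref{eq:in_i}), but each of the three parts has a genuine gap. For the lower bound, the explicit partition is never actually produced, and the recursive route you sketch, $\{[\{k\},S_k]\}\cup\pi_{k-1}'$, only yields $\min\{n_k,\ \tdepth\,\llbracket n_1,\dots,n_{k-1}\rrbracket\}$, which can be strictly less than $n_k$; iterating does not repair this, since the recursion bottoms out at $n_1$. Your discarded first instinct was the right one: instead of pairing only the atoms with their unions with $\{k\}$, pair \emph{every} nonempty $A\subseteq S_{k-1}$ with $A\cup\{k\}$ and keep $[\{k\},\{k\}]$ as a singleton, i.e.\ $[00\cdots01]\cup\bigcup[x_1\cdots x_{k-1}0,\,x_1\cdots x_{k-1}1]$ over nonzero $(x_1,\dots,x_{k-1})$; every top then has $k$-th coordinate $n_k$ after applying $f$, so the total depth is $n_k$.

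The missing ingredient in the two upper bounds is the clean consequence of disjointness that the paper isolates as a lemma: the atoms lie in $k$ \emph{distinct} intervals $I(1),\dots,I(k)$ with tops $t(1),\dots,t(k)$, and if $t(i)_j>0$ and $t(j)_i>0$ for $i\neq j$ then $\{i,j\}\in I(i)\cap I(j)$, a contradiction; hence at most one of $t(i)_j,t(j)_i$ is positive. For \eqref{eq:n_k} your dichotomy is pinned to the atoms $\{1\}$ and $\{k\}$, which is not enough: if $t(k)_2>0$, say, the interval containing $\{1\}$ may still have $t(1)_k>0$ and large height, and the bound must come from the atom $\{2\}$. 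The correct version is: either $t(k)_j=0$ for all $j<k$, whence $\height t(k)\le n_k$, or $t(k)_j>0$ for some $j<k$, whence $t(j)_k=0$ and $\height t(j)\le n_1+\cdots+n_{k-1}$; the index $j$ is dictated by the partition, not fixed in advance. For \eqref{eq:in_i} you correctly identify the inequality $\sum_i\height t(i)\le\sum_i i\,n_i$ as the crux but leave it unproved; it is immediate from the lemma above, since $t(i)_j+t(j)_i\le\max\{n_i,n_j\}=n_{\max\{i,j\}}$ for $i\neq j$ and $t(i)_i=n_i$, and summing over all pairs gives $\sum_{i<j}n_{\max\{i,j\}}+\sum_i n_i=\sum_i i\,n_i$. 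No greedy or exchange argument is needed. As written, the proposal's skeleton is right but none of the three claims is actually established.
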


\begin{rmk}[Comparison of the bounds]
None of the upper or lower bounds in \eqref{eq:n_k}, \eqref{eq:in_i} and Theorem \ref{thm:k/2} is always stronger than another. 

In fact, for the upper bounds in \eqref{eq:in_i} and Theorem \ref{thm:k/2},  
one can verify that when $k$ is odd, the upper bound in \eqref{eq:in_i} is stronger (i.e., no greater).
However, when $k$ is even, neither of them is always stronger; the upper bound in \eqref{eq:in_i} is stronger only if $n_1,n_2,\dots,n_{k-1}$ are not too close to each other.
In particular, when all $n_i$'s equal $n$, 
the inequality \eqref{eq:in_i} becomes ${\tdepth}\,\llbracket S\rrbracket \le \lfloor {{n(k+1)}/ 2}\rfloor$,
which is an equality if and only if $k$ is odd or $n=1$ (Corollary \ref{coro:n^k}).
This means that when $k$ is even and $n_i= n\ge 2$ for all $i$, 
the upper bound in \eqref{eq:in_i} is weaker.

Similarly, comparing the bounds in \eqref{eq:n_k} with others in the case of all equal $n_i$'s (or of similar size) and the case of $n_k\ge n_1+n_2+\cdots+n_{k-1}$, we find that no bound is always stronger than another. \qed
\end{rmk}

Before proving Theorem \ref{thm:n_k}, we need the following lemma, which allows us to derive the upper bounds in Theorem \ref{thm:n_k} and later in Section \ref{sec:k=5}.

\begin{lem}\label{lemma1}
In a given good interval partition of $\llbracket n_1,n_2,\dots,n_k\rrbracket$,
let $I(i)$ with top element $t(i)$ be the 
interval with bottom element $\{i\}$,  $i=1,2,\dots ,k$. 
Then the $I(i)$'s are distinct and the components $t(i)_j$ $(j=1,2,\dots,k)$ of $t(i)$ satisfy $$t(j)_i=0 \quad \text{for\ any}\ \ i\neq j\ \ \text{with}\ \ t(i)_j>0.$$ 
\end{lem}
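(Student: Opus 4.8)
The plan is to work inside a fixed good interval partition of $\llbracket n_1,\dots,n_k\rrbracket$ and argue about the intervals $I(i)=[\{i\},t(i)]$ containing the atoms $\{i\}=(0,\dots,0,1,0,\dots,0)$.

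\textbf{Distinctness of the $I(i)$.} First I would note that no two atoms can lie in the same interval of the partition. If $\{i\}$ and $\{j\}$ with $i\neq j$ both belonged to an interval $[X,Y]$, then $X\le\{i\}$ and $X\le\{j\}$ force $X=\bm 0$ (its $i$-th and $j$-th components are $0$ since $\{i\}_j=0$ and $\{j\}_i=0$, and every other component is $\le$ the corresponding component of $\{i\}$, hence $0$), contradicting $\bm 0\notin\llbracket S\rrbracket$. Hence $I(i)\neq I(j)$ whenever $i\neq j$.

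\textbf{The coordinate condition.} Fix $i\neq j$ and suppose $t(i)_j>0$; I must show $t(j)_i=0$. The key observation is that $I(j)=[\{j\},t(j)]$ is good, so its bottom element $\{j\}$ has all components in $\{0,1\}$ — in particular its $i$-th component is $0$ — which is automatic, but more usefully: consider the element $Z$ obtained from $\{i\}$ by raising its $j$-th component to $1$, i.e.\ $Z=\{i\}+\{j\}=(0,\dots,1,\dots,1,\dots,0)$ with $1$'s in positions $i$ and $j$. Since $t(i)_j>0$ and $t(i)_i\ge 1$, and goodness makes the top element $t(i)$ have $i$-th component either $0$ or $n_i\ge 1$ (so it is $\ge 1$ as $\{i\}\le t(i)$) and $j$-th component either $0$ or $n_j$ (so $t(i)_j\ge 1$), we get $\{i\}\le Z\le t(i)$, i.e.\ $Z\in I(i)$. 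Now suppose for contradiction that $t(j)_i>0$ as well; by the same reasoning $Z=\{j\}+\{i\}\in I(j)$. But then $Z\in I(i)\cap I(j)$ with $I(i)\neq I(j)$, contradicting the disjointness of the partition. Therefore $t(j)_i=0$, which is exactly the claimed implication.

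\textbf{Assembling the proof.} The write-up is then short: (1) state that atoms lie in distinct intervals, with the $X=\bm 0$ argument; (2) for $i\neq j$ with $t(i)_j>0$, use goodness of $I(i)$ to place $\{i\}+\{j\}$ inside $I(i)$; (3) if also $t(j)_i>0$, use goodness of $I(j)$ to place the same element $\{i\}+\{j\}$ inside $I(j)$; (4) disjointness forces $I(i)=I(j)$, contradicting (1); conclude $t(j)_i=0$. The only mild subtlety — and the step I would be most careful about — is confirming that goodness genuinely gives $t(i)_i\ge 1$ and $t(i)_j\ge 1$ once $t(i)_j>0$: since $\{i\}\in[\{i\},t(i)]$ we have $t(i)_i\ge\{i\}_i=1$, and goodness forces $t(i)_j\in\{0,n_j\}$, so $t(i)_j>0$ upgrades to $t(i)_j=n_j\ge 1$; all other components of $\{i\}+\{j\}$ are $0\le t(i)_\ell$. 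Everything else is bookkeeping, so there is no real obstacle here — the lemma is essentially a disjointness argument dressed up in the coordinate language of good partitions.
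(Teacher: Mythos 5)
Your proof is correct and follows essentially the same route as the paper's: the atoms must be bottom elements of pairwise distinct intervals, and if $t(i)_j,t(j)_i>0$ then the element $\{i,j\}$ would lie in both $I(i)$ and $I(j)$, contradicting disjointness. The extra appeal to goodness to upgrade $t(i)_j>0$ to $t(i)_j=n_j$ is harmless but unnecessary, since $t(i)_j>0$ already gives $t(i)_j\ge 1$.
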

\begin{proof}
Since each of $\{1\},\{2\},\dots,\{k\}$ is no greater than any other element of  $\llbracket n_1,n_2,\dots,n_k\rrbracket$, they must be the bottom elements of  distinct intervals. 
If $t(i)_j,t(j)_i>0$ for some $i\neq j$,
then $\{i,j\}\in I(i)\cap I(j)$.
This contradicts the fact that $I(i)$ and $I(j)$ are distinct and thus disjoint. 
\end{proof}

\begin{rmk}
By the definition \eqref{eq:f} of good interval partition, we have
\begin{equation}\label{eq:t(i)_j}
t(i)_j\in\{0,n_j\}\quad \text{and}\quad t(i)_i = n_i.
\end{equation}
\end{rmk}

Since we only consider good interval partitions,  for convenience we write an interval in a good interval partition of $\llbracket S\rrbracket$ by its induced interval in $\llbracket S_k\rrbracket$.
For example, $[10001, n_1n_200n_5]$ is written as $[10001,11001]$. 
In case the top and bottom elements of the induced interval coincide, we omit one of them. 
For instance, we write $[10001,n_1000n_5]$ as $[10001]$.

\begin{proof}[Proof of Theorem \ref{thm:n_k}]
(i) The lower bound  in \eqref{eq:n_k} follows from the observation that the following interval partition  has total depth $n_k$:
\begin{equation}\label{eq:partition}
[00\cdots01]\, \bigcup \left(\bigcup\,[x_1x_2\cdots x_{k-1}0, x_1x_2\cdots x_{k-1}1]\right),
\end{equation} 
where the second union is taken over $(x_1,x_2,\dots, x_{k-1})\in \{0,1\}^{k-1}\backslash\{(0,0,...,0)\}$.

\smallskip
(ii) 
We prove the upper bound  in \eqref{eq:n_k} by contradiction. Assume the contrary 
that there exists a good interval partition $\pi$ of  $\llbracket n_1,n_2,\dots,n_k\rrbracket$ 
such that ${\tdepth}\,\pi > \max\{n_k,  n_1+n_2+\cdots+n_{k-1}\}$. 
Thus from \eqref{eq:height} and \eqref{eq:tdepth pi} we get
\begin{equation}\label{eq:t(i)}
\sum_{j=1}^k  t(i)_j={\height\,} t(i)\ge{\tdepth}\,\pi > \max\{n_k, n_1+n_2+\cdots+n_{k-1}\}
\end{equation}
for all $1\le i\le k$.
It then follows from Lemma \ref{lemma1} that $t(i)_k>0$ for all $i$ and $t(k)_i=0$ for all $1\le i\le k-1$. 
Therefore ${\height}\,t(k)= t(k)_k \le n_k$, which contradicts \eqref{eq:t(i)} with $i=k$.

\smallskip
(iii) For \eqref{eq:in_i}, apply Lemma \ref{lemma1} to an optimal (and good) interval partition:
\begin{equation}\label{eq:aij+aji}
t(i)_j + t(j)_i \le \max\{n_i, n_j\}=n_{\max\{i,j\}},\quad \forall\ i\neq j.
\end{equation}

Summing \eqref{eq:aij+aji} and the second equality of \eqref{eq:t(i)_j} over $i,j=1,2,\dots, k$, along with \eqref{eq:height}, we yield
\begin{align*}
\sum_{i=1}^k {\height\,} t(i) 
=&\ \sum_{i,j=1}^k  t(i)_j 
= \sum_{1\le i<j\le k}  \big(t(i)_j + t(j)_i\big) + \sum_{i=1}^k  t(i)_i\\
\le&\ \sum_{1\le i<j\le k} n_{\max\{i,j\}} +\sum_{i=1}^k  n_i
= \sum_{i=1}^k i n_i.
\end{align*}
Thus there exists an $i$ such that ${\height\,} t(i)$ is at most $\lfloor{\sum_{i=1}^k i n_i/k}\rfloor$.
Therefore, the total depth of this interval partition is at most $\lfloor{\sum_{i=1}^k i n_i/k}\rfloor$ by \eqref{eq:tdepth pi}, which implies \eqref{eq:in_i}.
\end{proof}

\begin{rmk}\label{rmk:n_k}
When $n_k\ge n_1+n_2+\cdots+n_{k-1}$, the inequality \eqref{eq:n_k} becomes an equality: 
${\tdepth\,} \llbracket n_1,$ $n_2,\dots,n_k \rrbracket = n_k$. 
This implies that the interval partition  \eqref{eq:partition} is in fact optimal.
\qed
\end{rmk}

\section{\texorpdfstring{Case of $k\le 5$}{Case of k<=5}}\label{sec:k=5}
In this section we determine the total depth of $\llbracket n_1,n_2,\dots,n_k \rrbracket$ for $k\le 5$ using Theorems \ref{thm:good}, \ref{thm:n_k} and Lemma \ref{lemma1}.
For ease of notation, we denote ${\height\,} t(i)$ by $h(i)$ (recall $t(i)$ from Lemma \ref{lemma1}), and $\sum_{i\in H} n_i$ by $\sigma(H)$. 
For example, we have $\sigma(13)=n_1+n_3$ and $\sigma(\emptyset)=0$.

\begin{thm}\label{thm:5}
For $1\le n_1\le n_2\le \cdots \le n_5$, we have
\begin{align*}
\tdepth\,\llbracket n_1 \rrbracket &=\sigma(1),\\
\tdepth\,\llbracket n_1,n_2 \rrbracket &=\sigma(2),\\
\tdepth\,\llbracket n_1,n_2,n_3 \rrbracket &=\max\{\sigma(3),\sigma(12)\},\\
\tdepth\,\llbracket n_1,n_2,n_3,n_4 \rrbracket &=\max\{\sigma(4),\sigma(24)\wedge \sigma(123)\},\\
\tdepth\,\llbracket n_1,n_2,n_3,n_4,n_5 \rrbracket 
&= \max\{
\sigma(5),
\sigma(35) \wedge \sigma(1234),
\sigma(45)\wedge \sigma(234)\wedge \sigma(135), \\
&\ \ \ \ \ \ \  \ \ \ \,
\sigma(45)\wedge \sigma(1234)\wedge \sigma(125),
\sigma(125)\wedge \sigma(134)
\},
\end{align*}
where $\wedge$ represents the minimum function. 
\end{thm}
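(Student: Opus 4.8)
The plan is to handle the five formulas in increasing order of $k$, exploiting the fact (Theorem~\ref{thm:good}) that it suffices to optimize over good interval partitions, equivalently over interval partitions $\pi_k$ of $\llbracket S_k\rrbracket$ with the objective $\min_{[X,Y]\in\pi_k}\sum_j n_j y_j$. The cases $k=1,2$ are immediate: for $k=1$ the poset is a chain and the only partition gives $h(1)=n_1$; for $k=2$, the partition $[01,02]\cup[10,12]$ (in induced notation $[01]\cup[10,11]$) has total depth $\min\{n_2,n_1+n_2\}=n_2=\sigma(2)$, and by Theorem~\ref{thm:n_k} this is also the upper bound $\max\{n_2,n_1\}$. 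For $k=3$, the lower bound $\max\{\sigma(3),\sigma(12)\}$ comes from two explicit partitions — the partition~\eqref{eq:partition} giving $n_3=\sigma(3)$, and a partition putting $\{3\}$ into a tall interval over it while $\{1\},\{2\}$ go into intervals whose tops have height $n_1+n_2$ — and the matching upper bound follows from Lemma~\ref{lemma1}: if every $h(i)$ exceeded both $\sigma(3)$ and $\sigma(12)$, then $t(i)_3>0$ for all $i$ forces $t(3)_1=t(3)_2=0$, so $h(3)=n_3$, a contradiction.

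For $k=4$ and $k=5$ the same strategy applies but the bookkeeping becomes the real work. The \textbf{lower bounds} are proved by exhibiting, for each term in the max, a good interval partition achieving it; each such partition is described by its induced partition of $\llbracket S_k\rrbracket$, and one must check both that it is genuinely a partition and that every top element has height at least the claimed value. The \textbf{upper bound} is the crux: one assumes a good optimal partition $\pi$ with $\tdepth\,\pi>$ (the asserted max) and derives a contradiction by a case analysis on the "support pattern" of the matrix $\big(t(i)_j\big)_{i,j}$, whose entries lie in $\{0,n_j\}$ by \eqref{eq:t(i)_j} and which satisfies the orthogonality constraint of Lemma~\ref{lemma1}: for $i\neq j$, not both $t(i)_j$ and $t(j)_i$ are positive. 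So the off-diagonal support is an orientation of the edges of $K_k$ (edge $ij$ points from $i$ to $j$ if $t(i)_j=n_j$), and $h(i)=n_i+\sum_{j:\,i\to j}n_j$. The claim to prove is purely combinatorial: for every tournament on $\{1,\dots,k\}$, some vertex $i$ has $n_i+\sum_{j:\,i\to j}n_j$ at most the stated max — and conversely that bound is tight.

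Concretely, for $k=4$ one runs through the tournaments on $4$ vertices up to the relevant symmetry and, using $n_1\le\cdots\le n_4$, checks that $\min_i h(i)$ never exceeds $\max\{\sigma(4),\sigma(24)\wedge\sigma(123)\}$; the extremal tournaments (those forcing equality) are exactly the ones realized by the lower-bound partitions. For $k=5$ the tournament analysis is more involved — there are four tournaments on $5$ vertices up to isomorphism, but one must track which \emph{labeling} is worst given the monotonicity $n_1\le\cdots\le n_5$, which is why the answer is a max of five terms rather than fewer. The key simplification is that since $\{k\}$ has $t(k)_j\in\{0,n_j\}$ with $j\le k$ having the largest weights, and since $h(k)=n_k+\sum_{j:\,k\to j}n_j$, one first splits on how many (and which) arrows leave vertex $5$; within each branch the problem reduces to a $k=4$-type analysis on $\{1,2,3,4\}$ with a modified objective. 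The main obstacle I expect is organizing this five-way case split cleanly enough that the five terms in the stated maximum emerge transparently rather than from brute enumeration — in particular, verifying that no sixth case produces a larger forced value, and that each of the five listed terms is actually attained by a legitimate good interval partition (the partition-property check for the more intricate partitions, e.g. the one realizing $\sigma(125)\wedge\sigma(134)$, is where a careless argument would slip).
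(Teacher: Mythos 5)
Your proposal follows essentially the same route as the paper: lower bounds from explicit good partitions for each term in the max, and upper bounds by contradiction from the orthogonality constraint of Lemma~\ref{lemma1} on the matrix $\big(t(i)_j\big)$ — your tournament language is just a repackaging of the paper's case split on which off-diagonal entries are positive, and it is valid since the support is a sub-orientation of $K_k$ and missing arcs only decrease $h(i)$. One small correction: there are $12$, not $4$, tournaments on $5$ vertices up to isomorphism, so the $k=5$ enumeration is heavier than you suggest (the paper's appendix carries out the full labeled case analysis directly).
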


\begin{proof}
Recall from Theorem \ref{thm:good} that it suffices to consider only the good interval partitions.
For convenience, we denote the right-hand side by $d_k$ for the case $k$.

\medskip
{\bf (1) {$k=1$}.}
The total depth of the only good interval partition $[1]$ is $n_1$.

\medskip
{\bf(2) {$k=2$}.}
By Remark \ref{rmk:n_k}, the total depth is $n_2$, achieved by $[10,11]\cup [01]$.

\medskip
{\bf(3) {$k=3$}.}
The total depth is at most $d_3$ by \eqref{eq:n_k}, and $d_3$ is achieved by one of the following partitions.

\smallskip
\noindent
{\it Fact  3.1.}
${\tdepth}\, [110,111]\cup[100,101]\cup[010,011]\cup[001]=\sigma(3).$

\smallskip
\noindent
{\it Fact  3.2.}
${\tdepth}\, [100,110]\cup[001,101]\cup[010,011]\cup[111]= \sigma(12).$

\medskip
{\bf(4) {$k=4$}.}
Computing the total depths of the following two partitions shows that $d_4$ can be achieved.

\smallskip
\noindent
{\it Fact  4.1.}
${\tdepth}\, \pi=\sigma(4)$ for $\pi=
[1100,1111]\cup[1000,1011]\cup[0100,0111]\cup[0010,0011]\cup[0001].$

\smallskip
\noindent
{\it Fact  4.2.}
${\tdepth} \, \pi=\sigma(24)\wedge \sigma(123)$ for 
$\pi=
[1000,1011]\cup[0100,1110]\cup[0010,0011]\cup[0001,0101]\cup[1101,1111]\cup[0111].$

\smallskip
Now we show that the total depth is at most $d_4$.

\smallskip
\noindent
{\bf Case 1.}
$\sigma(24) \ge \sigma(123)$, i.e., $d_4=\max\{\sigma(4),\sigma(123)\}$. The conclusion follows by
\eqref{eq:n_k}.

\smallskip
\noindent
{\bf Case 2.}
$\sigma(24)< \sigma(123)$.
We prove by contradiction.
Assume the contrary that $\pi$ is a good interval partition such that 
$d_4<{\tdepth}\,\pi \le h(i)$ 
for all $i$
(recall definition \eqref{eq:tdepth pi}).
Then  
$h(i)>d_4=\sigma(24)$ for all $i$. 


\smallskip
(i) If $t(4)_3=0$, then $t(4)_1,t(4)_2>0$ since $h(4)>\sigma(24)$. 
It follows from Lemma \ref{lemma1} that $t(1)_4=t(2)_4=0$.
Since $h(1)>\sigma(24)\ge\sigma(13)$, we have $t(1)_2>0$ and hence $t(2)_1=0$.
Therefore $h(2)\le \sigma(23)\le d_4$, which is a contradiction.

\smallskip
(ii) If $t(4)_3>0$, then  $t(3)_4=0$ by Lemma \ref{lemma1}.
Since $h(3)>\sigma(24)$, we have $t(3)_1,t(3)_2>0$. Thus $t(1)_3=t(2)_3=0$.
Combining this with $h(2)>\sigma(24)$ leads to $t(2)_1>0$. Thus $t(1)_2=0$. Hence $h(1)\le\sigma(14)\le\sigma(24)=d_4$, which is a contradiction.

\medskip
{\bf(5) {$k=5$}.} The idea is similar while the proof becomes much more complicated; see \ref{sec:5} for details.
\end{proof}

\section{Future Research}
From the case $k=5$, we see that the situation will become extremely complicated when $k\ge 6$ via the same approach.
It would be interesting to know if there is an explicit expression for the total depth for any $k$.

It would also be interesting to investigate other classes of posets to see if their total depths can be found through a combinatorial approach, for example, the poset $\llbracket V_{q,k}\rrbracket$ consisting of the nontrivial subspaces of a $k$-dimensional vector space $V_{q,k}$ would be a $q$-analogue of $\llbracket S_k\rrbracket$. 
Another class of posets of interest, generalizing the poset $\llbracket V_{q,k}\rrbracket$, would be the geometric lattices excluding the bottom element.


Finally, if we replace ${\height}\,Y$ in the right-hand side of (\ref{eq:tdepth pi}) with the \emph{interval depth} of  $[X,Y]$, namely the maximal length of a chain from $X$ to $Y$, we would obtain another variant 
of the Stanley depth. 
Through the same approach, we can prove that for the poset $\llbracket S\rrbracket$, there still exists a good interval partition with the maximal interval depth. 
It would be interesting to investigate this new variant 
for various classes of posets.

\section*{Acknowledgements} 
The author is greatly indebted to Professor Richard P. Stanley for suggesting this problem, for very careful readings of the previous versions of the paper, and for invaluable advice.
She is also grateful to the referees and the associate editor for their very careful and helpful comments.

\appendix
\section{\texorpdfstring{Proof of Theorem \ref{thm:5} for $k=5$}{Proof of Theorem \ref{thm:5} for k=5}}\label{sec:5}
\begin{proof} 
Computing the total depths of the following five interval partitions shows that the value on the right-hand side, denoted by $d_5$, can be achieved.

\smallskip
\noindent
{\it Fact  5.1.}
${\tdepth}\,\pi =\sigma(5)$ for
\begin{align*}
\pi=&\ [11000,11111]\cup[10000,10111]\cup[01000,01111]\cup[00100,00111]\\
&\ \cup[00010,00011]\cup[00001].
\end{align*}

\noindent
{\it Fact  5.2.}
${\tdepth}\,\pi  =\sigma(35)\wedge\sigma(1234)=: m_1$ for
\begin{align*}
\pi=&\ [10000,10101]\cup[01000,01101]\cup[00100,00101]\cup[00010,11110]\\
&\ \cup[00001,00011] \cup[01011,01111]\cup[10011,10111]\cup[11000,11101]\\
&\ \cup[11111]\cup[11011]\cup[00111].
\end{align*}

\noindent
{\it Fact  5.3.}
${\tdepth}\,\pi  =\sigma(45)\wedge \sigma(234)\wedge \sigma(135)=:  m_2$ for
\begin{align*}
\pi=&\ [00001,00011]\cup[00010,01110]\cup[00100,10101]\cup[01000,01101]\cup[10000,\\
&\ \, 10011] \cup[00111,01111]\cup[10110,10111]\cup[11000,11111]\cup[01011].
\end{align*}

\noindent
{\it Fact  5.4.}
${\tdepth}\,\pi  =\sigma(45)\wedge \sigma(1234)\wedge \sigma(125)=:  m_3$ for
\begin{align*}
\pi=&\ [00001,00011]\cup[00010,11110]\cup[00100,10101]\cup[01000,01101]\\
&\ \cup[10000,11001] \cup[10011,10111]\cup[01110,01111]\\
&\ \cup[11100,11101]\cup[11011,11111]\cup[00111].
\end{align*}

\smallskip
\noindent
{\it Fact  5.5.}
${\tdepth} \,\pi =\sigma(125)\wedge \sigma(134)=:  m_4$ for
\begin{align*}
\pi =&\ [10000,11001]\cup[01000,01110]\cup[00100,10101]\cup[00010,10110]\\
&\ \cup[00001,01011] \cup[10011,11011]\cup[01101,01111]\cup[11010,11110]\\
&\ \cup[00111,10111]\cup[11100,11101]\cup[11111].
\end{align*}

Now we show that  the total depth is at most $d_5$.
\begin{case}
$m_1=\!\sigma(1234)$, i.e., $d_5=\max\{\sigma(5),\sigma(1234)\}$. 
The conclusion follows from
\eqref{eq:n_k}.
\end{case}

\begin{case}
$m_1=\sigma(35)$. 
Then $m_2$ can be
$\sigma(234)$, $\sigma(135)$ or $\sigma(45)$.
We prove by contradiction.
Assume the contrary that $\pi$ is a good interval partition with
${\tdepth}\,\pi>d_5$. 
We will consider the $t(i)$'s and apply Lemma \ref{lemma1} repeatedly.

\begin{subcase}
$m_2=\sigma(234)$.
Then $m_3$ can be $\sigma(1234)$, $\sigma(45)$ or $\sigma(125)$.
\begin{subsubcase}
$m_3=\sigma(1234)$.
Then ${\tdepth}\,\pi>d_5\ge\max\{\sigma(5),\sigma(1234)\}$, 
which contradicts \eqref{eq:n_k}.
\end{subsubcase}

\begin{subsubcase}\label{2.1.2}
$m_3=\sigma(45)$. Then 
$h(i)>d_5=\max\{\sigma(45),\sigma(234)\}$ for all $i$.
Since $h(5)> \sigma(45)>\sigma(5)$,
there exists $j\le 4$ such that $t(5)_j>0$ and thus $t(j)_5=0$. 
Since $h(j)> \sigma(234)$, we get $t(j)_i>0$ for all $i\le 4$. 
Thus for $i\in \{1,2,3,4\}\backslash \{j\}$, we have $t(i)_j=0$. 
It follows that $t(i)_5>0$ (otherwise, $h(i)\le \sigma(1234) - t(i)_j \le \sigma(234)$) 
and $t(5)_i=0$. Hence $h(5)\le t(5)_j+t(5)_5\le \sigma(45)$, which is a contradiction.
\end{subsubcase}

\begin{subsubcase}
$m_3=\sigma(125)$.
Then 
$h(i)>d_5=\max\{\sigma(35),\sigma(234),\sigma(125)\}$ for all $i$.
If $t(j)_5=0$  for some $j\le 3$, then
it follows from the analysis in Subcase \ref{2.1.2} that $h(5)\le t(5)_j+t(5)_5\le \sigma(35)$, which is a contradiction.
Hence
$t(j)_5>0$ and thus $t(5)_j=0$ for all $j\le 3$.
Since $h(5)>\sigma(35)>\sigma(5)$, we have $t(5)_4>0$ and thus $t(4)_5=0$.
Since $h(4)>\sigma(234)$, we get $t(4)_i>0$ and thus $t(i)_4=0$ for all $i\le 3$.
Since $h(1),h(2)>\sigma(125)$, we have $t(1)_3,t(2)_3>0$. 
It follows that  $t(3)_1=t(3)_2=0=t(3)_4$.
Therefore $h(3)\le \sigma(35)$, which is a contradiction.
\end{subsubcase}
\end{subcase}

\begin{subcase}
$m_2=\sigma(135)$.
Then 
$h(i)>d_5=\sigma(135)$ for all $i$.
Since $h(3)>\sigma(135)\ge \sigma(123)$, at least one of $t(3)_4$ and $t(3)_5$ is positive. 

\begin{subsubcase}
$t(3)_4, t(3)_5>0$.
Then $t(4)_3=t(5)_3=0$ by Lemma \ref{lemma1}.
Thus 
\begin{equation}\label{eqt4354}
t(4)_3=0,\
h(4)>\sigma(135)\ge \sigma(124)\ 
\Rightarrow\ t(4)_5>0\ 
\Rightarrow\ t(5)_4=0.
\end{equation}
Therefore $h(5)\le\sigma(125)\le\sigma(135)$, which is a contradiction.
\end{subsubcase}

\begin{subsubcase}
$t(3)_4=0<t(3)_5$. 
Then $t(3)_2>0$ (otherwise, $h(3)\le \sigma(135)$) and $t(5)_3=t(2)_3=0$ (Lemma \ref{lemma1}).
Since $h(2)>\sigma(135)$, we have
$t(2)_4,t(2)_5>0$ and thus $t(4)_2=t(5)_2=0$.
Combining this with $h(4)>\sigma(135)\ge\sigma(134)$, $t(5)_3=0$,
and $h(5)>\sigma(135)\ge \sigma(15)$ yields $t(4)_5,t(5)_4>0$, 
which contradicts Lemma \ref{lemma1}.
\end{subsubcase}

\begin{subsubcase}
$t(3)_5=0<t(3)_4$. 
Then $t(4)_3=0$ (Lemma \ref{lemma1}) and $t(3)_2>0$ (otherwise, $h(3)\le \sigma(134)\le \sigma(135)$).
Therefore $t(2)_3=0$ (Lemma \ref{lemma1}) and \eqref{eqt4354} still holds, namely, $t(5)_4=0$. Since $h(5)>\sigma(135)$, we have 
$t(5)_2>0$ and thus $t(2)_5=0$.
Therefore $h(2)\le \sigma(124)\le \sigma(135)$, which is a contradiction.
\end{subsubcase}
\end{subcase}

\begin{subcase}
$m_2=\sigma(45)$.
Then $m_4$ can be $\sigma(125)$ or $\sigma(134)$.
\begin{subsubcase}
$m_4=\sigma(125)$.
Then 
$h(i)>d_5=\max\{\sigma(45),\sigma(125)\}$ for all $i$.
Since $h(4)>\sigma(125) \ge \sigma(124)$,  at least one of $t(4)_3$ and $t(4)_5$ is positive.

\begin{subsubsubcase}
$t(4)_5>0$. Then $t(5)_4=0$ by Lemma \ref{lemma1}. Thus we have
\begin{align}\label{eqt35}
t(5)_4=0,\ h(5)>\sigma(125)\ &\Rightarrow\ t(5)_3>0\ \Rightarrow\ t(3)_5=0,\\ \label{eqt43}
t(3)_5=0,\ h(3)>\sigma(125) \ge \sigma(123)\ 
&\Rightarrow\ t(3)_4>0\ \Rightarrow\ t(4)_3=0.
\end{align}
Since $t(5)_4=0$ and $h(5)>\sigma(45) \ge \sigma(35)$, at least one of $t(5)_1$ and $t(5)_2$ is positive.

\smallskip
(i) If $t(5)_1>0$, then $t(1)_5=0$ by Lemma \ref{lemma1}. Thus we have
\begin{align}\label{eqt41}
t(1)_5=0,\ h(1)>\sigma(125) \ge \sigma(123) 
\Rightarrow t(1)_4\!>0\, \Rightarrow&\ t(4)_1\!=0,\\ \nonumber 
t(4)_1=t(4)_3=0\ \text{(\ref{eqt43},\ \ref{eqt41})},\ h(4)>\sigma(45) \Rightarrow  t(4)_2\!>0\, \Rightarrow&\ t(2)_4\!=0,\\ \label{eqt32}
t(2)_4=0,\ h(2)>\sigma(125) \Rightarrow t(2)_3\!>0\, \Rightarrow&\ t(3)_2\!=0,\\ \label{eqt32513}
\!\!\! t(3)_2\!=\!t(3)_5\!=\! 0\, \text{(\ref{eqt35}, \ref{eqt32})},\, h(3)\! >\!\sigma(45)\! \ge \!\sigma(34) \Rightarrow t(3)_1\! >0\, \Rightarrow&\ t(1)_3\!=0. 
\end{align}
Combining this with $t(1)_5=0$ yields  $h(1)\le\sigma(124)\le\sigma(125)$, which is a contradiction.

\smallskip
(ii) If $t(5)_1=0<t(5)_2$, then $t(2)_5=0$ by Lemma \ref{lemma1}. Thus we have
\begin{align}\label{eqt3242}
\!\! t(2)_5=0,\, h(2)\!>\!\sigma(125) \!\ge\! \sigma(124) \Rightarrow t(2)_3,t(2)_4 >0\, \Rightarrow t(3)_2 = t(4)_2 = 0,
\end{align}
therefore \eqref{eqt32513} still holds.
Combining \eqref{eqt43}, \eqref{eqt3242} and $h(4)>\sigma(45) $ leads to $t(4)_1>0$ and thus $t(1)_4=0$. Therefore $h(1)\le \sigma(125)$ by \eqref{eqt32513}, which is a contradiction.
\end{subsubsubcase}

\begin{subsubsubcase}
$t(4)_3>0=t(4)_5$. Then $t(3)_4=0$ by Lemma \ref{lemma1}. 
Since $h(3)>\sigma(125)\ge \sigma(123)$, we have $t(3)_5>0$ and thus  $t(5)_3=0$.
Further, since $h(5)>\sigma(45)$, at least one of $t(5)_1$ and $t(5)_2$ is positive.

\smallskip
(i) If $t(5)_1>0$, then $t(1)_5=0$ by Lemma \ref{lemma1}. Thus \eqref{eqt41} still holds.
Combining this with $t(4)_5=0$ and $h(4)>\sigma(45) \ge \sigma(34) $ leads to $t(4)_2>0$ and thus $t(2)_4=0$. 
Therefore \eqref{eqt32} still holds and
\begin{equation}\label{eqt13}
t(3)_2=0=t(3)_4,\ h(3)>\sigma(45)\ge \sigma(35) \Rightarrow t(3)_1>0\ \Rightarrow t(1)_3=0.
\end{equation}
Combining this with $t(1)_5=0$ yields  $h(1)\le \sigma(124)\le\sigma(125)$, which is a contradiction.

\smallskip
(ii) If $t(5)_1=0<t(5)_2$, then $t(2)_5=0$  by Lemma \ref{lemma1}. 
Thus \eqref{eqt3242} and \eqref{eqt13} still hold.
Combining \eqref{eqt3242} with $t(4)_5=0$ and $h(4)>\sigma(45) \ge \sigma(34)$ leads to $t(4)_1>0$ and thus $t(1)_4=0$. Therefore $h(1)\le \sigma(125)$ by \eqref{eqt13}, which is a contradiction.
\end{subsubsubcase}
\end{subsubcase}

\begin{subsubcase}
$m_4=\sigma(134)$.
Then 
$h(i)>d_5=\max\{\sigma(45),\sigma(134)\}$ for all $i$.
Since $h(4)>\sigma(134)\ge \sigma(124)$, at least one of $t(4)_3$ and $t(4)_5$ is positive.

\begin{subsubsubcase}
$t(4)_5=0<t(4)_3$. Then $t(3)_4=0$ by Lemma \ref{lemma1}. Thus 
\begin{align}\label{eqt3453}
t(3)_4=0,\ h(3)>\sigma(134)\ge \sigma(123) 
\Rightarrow t(3)_5>0\, \Rightarrow&\ t(5)_3=0,\\ \label{eqt4524}
t(4)_5=0,\ h(4)>\sigma(134) \Rightarrow t(4)_2>0\, \Rightarrow&\ t(2)_4=0,\\ \label{eqt2452}
t(2)_4=0,\ h(2)>\sigma(134)\ge \sigma(123) 
\Rightarrow t(2)_5>0\, \Rightarrow&\ t(5)_2=0,\\ \nonumber
t(5)_2=t(5)_3=0\ \text{(\ref{eqt3453},\ \ref{eqt2452})},\ h(5)>\sigma(45)
\Rightarrow t(5)_1>0\,
\Rightarrow&\ t(1)_5=0,\\ \label{eqt2341}
t(1)_5=0,\ h(1)>\sigma(134) \Rightarrow t(1)_i\,>0,\ i=2,3,4\, \Rightarrow&\ t(i)_1\,=0,\\  \nonumber
t(3)_1=0=t(3)_4,\
h(3)>\sigma(45)\ge \sigma(35) 
\Rightarrow t(3)_2>0\, \Rightarrow&\ t(2)_3=0.
\end{align}
Combining this with \eqref{eqt4524} and \eqref{eqt2341} yields $h(2)\le\sigma(25) \le \sigma(45)$, which is a contradiction.
\end{subsubsubcase}

\begin{subsubsubcase}
$t(4)_5>0$. Then  $t(5)_4=0$ by Lemma \ref{lemma1}. Since $h(3)>\sigma(134)\ge \sigma(123)$, at least one of $t(3)_4$ and $t(3)_5$ is positive.

\smallskip
(i) If $t(3)_5>0$, then $t(5)_3=0$ by Lemma \ref{lemma1}. 
Since $t(5)>\sigma(45)\ge \sigma(25)$, we have $t(5)_1,t(5)_2>0$ and hence $t(1)_5=t(2)_5=0$.
Thus \eqref{eqt2341} still holds and 
\begin{align}\label{eqt342}
t(2)_5=0, h(2)>\sigma(134) 
\Rightarrow t(2)_3,t(2)_4>0  \Rightarrow t(3)_2= t(4)_2=&\ 0, \\ \nonumber
\!\! t(3)_2=0=t(3)_1\, \eqref{eqt2341},\, h(3)\!>\!\sigma(45)\!\ge\! \sigma(35) 
\Rightarrow t(3)_4>0\, \Rightarrow t(4)_3=&\ 0.
\end{align}
Combining this with \eqref{eqt2341} and \eqref{eqt342} yields $h(4)\le\sigma(45)$, which is a contradiction. 

\smallskip
(ii) If $t(3)_5=0<t(3)_4$, then $t(4)_3=0$ by Lemma \ref{lemma1}. Thus we have
\begin{align}\label{eqt3523}
t(3)_5=0,\ h(3)>\sigma(134)  \Rightarrow\, t(3)_2>0\ \Rightarrow&\ t(2)_3=0,\\
\nonumber
t(2)_3=0,\ h(2)>\sigma(134)\ge \sigma(124) 
\Rightarrow\, t(2)_5>0\ \Rightarrow&\ t(5)_2=0,\\
\nonumber
t(5)_2=0=t(5)_4,\ h(5)>\sigma(45)\ge \sigma(35) 
\Rightarrow\, t(5)_1>0\ \Rightarrow&\ t(1)_5=0,
\end{align}
therefore \eqref{eqt2341} still holds.
Combining this with \eqref{eqt3523} and $h(2)>\sigma(45)\ge \sigma(25)$ leads to $t(2)_4>0$ and thus  $t(4)_2=0$.  In conjunction with \eqref{eqt2341} and  $t(4)_3=0$,  we yield $h(4)\le\sigma(45)$, which is a contradiction. 
\end{subsubsubcase}
\end{subsubcase}
\end{subcase}
\end{case}
This completes the proof.
\end{proof}

\section*{References}

\end{document}